\newtheorem{remark}[theorem]{Remark}
\newtheorem{assumption}[theorem]{Assumption}
\newtheorem{problem}[theorem]{Problem}
\newtheorem{algorithm}[theorem]{Algorithm}
\newcommand{\sr}{\stackrel}
\newcommand{\rar}{\rightarrow}
\newcommand{\be}{\begin{equation}}
\newcommand{\ee}{\end{equation}}
\newcommand{\bea}{\begin{eqnarray}}
\newcommand{\eea}{\end{eqnarray}}
\newcommand{\bes}{\begin{eqnarray*}}
\newcommand{\ees}{\end{eqnarray*}}
\newcommand{\bi}{\begin{itemize}}
\newcommand{\ei}{\end{itemize}}
\newcommand{\ben}{\begin{enumerate}}
\newcommand{\een}{\end{enumerate}}
\newcommand{\bp}{\begin{problem}}
\newcommand{\ep}{\end{problem}}
\newcommand{\hso}{\hspace{.1in}}
\newcommand{\hst}{\hspace{.2in}}
\newcommand{\hse}{\hspace{.05in}}
\title{Dynamic Programming Subject to Total Variation Distance Ambiguity}
\author{Ioannis~Tzortzis\thanks{Department of Electrical and Computer Engineering, University of Cyprus (UCY), Nicosia, Cyprus. ({\tt tzortzis.ioannis@ucy.ac.cy}).}
        \and Charalambos~D.~Charalambous\thanks{Department of Electrical and Computer Engineering, University of Cyprus (UCY), Nicosia, Cyprus. ({\tt chadcha@ucy.ac.cy}).}
\and Themistoklis~Charalambous\thanks{School of Electrical Engineering, Royal Institute of Technology (KTH), Stockholm, Sweden. ({\tt themisc@kth.se}).}}
\begin{document}
\maketitle

\begin{abstract}
The aim of this paper is to address optimality of stochastic control strategies via dynamic programming subject to total variation distance ambiguity on the conditional distribution of the controlled process. We formulate the stochastic control problem using minimax theory, in which the control minimizes the pay-off while the conditional distribution, from the total variation distance set, maximizes it.
 
First, we investigate the maximization of a linear functional on the space of probability measures on abstract spaces, among those probability measures which are within  a total variation distance from a nominal probability measure, and then we give the maximizing probability measure in closed form. Second, we utilize the solution of the maximization to solve minimax stochastic control with deterministic control strategies, under a Markovian and a non-Markovian assumption, on the conditional distributions of the controlled process. The results of this part include: 1) Minimax optimization subject to total variation distance ambiguity constraint; 2) new dynamic programming recursions, which involve the oscillator seminorm of the value function, in addition to the standard terms; 3) new infinite horizon discounted dynamic programming equation, the associated contractive property, and a new policy iteration algorithm. Finally, we provide illustrative examples for both the finite and infinite horizon cases. For the infinite horizon case we invoke the new policy iteration algorithm to compute the optimal strategies.
\end{abstract}

\begin{keywords} Stochastic Control, Minimax, Dynamic Programming, Total Variational Distance\end{keywords}

\begin{AMS} 90C39, 93E20, 49J35\end{AMS}

\pagestyle{myheadings}
\thispagestyle{plain}
\markboth{I. TZORTZIS, C. D. CHARALAMBOUS AND T. CHARALAMBOUS}{DYNAMIC PROGRAMMING SUBJECT TO TOTAL VARIATION DISTANCE AMBIGUITY}

\section{Introduction}
Dynamic programming recursions are often employed in optimal control and decision theory to establish existence of optimal strategies, to derive necessary and sufficient optimality conditions, and to compute the optimal strategies either in closed form or via algorithms \cite{caines88,varayia86,vanSchuppen10}. The cost-to-go and the corresponding dynamic programming recursion, in their general form, are functionals of the conditional distribution of the underlying state process (controlled process) given the past and present state and control processes \cite{caines88}. Thus, any ambiguity of the controlled process conditional distribution will affect the optimality of the strategies. The term ``ambiguity'' is used to differentiate from the term ``uncertainty" often used in control nomenclature to account for situations in which the true and nominal distribution (induced by models) are absolutely continuous, and hence they are defined on the same state space. This distinction is often omitted from various robust deterministic and stochastic control approaches, including minimax and risk-sensitive formulations \cite{ahmed99,baras,basar1995h,Bensoussan95,charhibey96,Charalambous07,elliotaggoun,James94,kumar81,Petersen,pra96,Ugrinovskii,whittle}. In this paper, the class of models is described by a ball with respect to the total variation distance between the nominal distribution and the true distribution, hence it admits distributions which are singular  with respect to the nominal distribution.

The main objective of this paper is to investigate the effect on the cost-to-go and dynamic programming of the ambiguity in the controlled process conditional distribution, and hence on the optimal decision strategies. Specifically, we quantify the conditional distribution ambiguity of the controlled process by a ball with respect to the total variation distance metric, centered at a nominal conditional distribution, and then we derive a new dynamic programming using minimax theory, with two players: player I the control process and player II the conditional distribution (controlled process), opposing each other actions. In this minimax game formulation, player's I objective is to  minimize the cost-to-go, while player's II objective is to maximize it. The maximization over the total variation distance ball of player II is addressed by first deriving results related to the maximization of linear functionals on a subset of the space of signed measures. Utilizing these results, a new dynamic programming recursion is presented which, in addition to the standard terms, includes additional terms that codify the level of ambiguity allowed by player II with respect to the total variation distance ball. Thus, the effect of player I, the control process, is to minimize, in addition to the classical terms, the difference between the maximum and minimum values of the cost-to-go, scaled by the radius of the total variation distance ambiguity set. We treat in a unified way the finite horizon case, under both the Markovian and non-Markovian nominal controlled processes, and the infinite horizon case. For the infinite horizon case we consider a discounted pay-off and we show that the operator associated with the resulting dynamic programming equation under total variation distance ambiguity is contractive. Consequently, we derive a new policy iteration algorithm to compute the optimal strategies. Finally, we provide examples for the finite and for the infinite horizon case.

Previous related work on optimization of stochastic systems subject to total variation distance ambiguity is found in  \cite{fcn2012} for continuous time controlled diffusion processes described by It\^{o} differential equations. However, the solution method employed in \cite{fcn2012} is fundamentally different; it approaches the maximization problem indirectly, by employing Large Deviations concepts to derive the maximizing measure as a convex combination of a tilted probability measure and the nominal measure, under restrictions on the class of measures considered. The dynamic programming equation derived in \cite{fcn2012} is limited by the assumption that the maximizing measure is absolutely continuous with respect to the nominal measure.

In this paper, our focus is to understand the effect of total variation distance ambiguity of the conditional distribution on dynamic programming, from a different point of view, utilizing concepts from signed measures. Consequently, we derive a new dynamic programming recursion which depends explicitly on the radius of the total variation distance, the closed form expression of the maximizing measure, or the oscillator seminorm of the value function. One of the fundamental properties of the maximizing conditional distribution is that, as the ambiguity radius increases, the maximizing conditional distribution becomes singular with respect to the nominal distribution. The point to be made here is that the total variation distance ambiguity set admits controlled process distributions which are not necessarily defined on the same state space as the nominal controlled process distribution. In terms of robustness of the optimal policies, this additional feature is very attractive compared to minimax techniques based on relative entropy uncertainty or risk-sensitive pay-offs \cite{ahmed99,baras,basar1995h,Bensoussan95,charhibey96,Charalambous07,elliotaggoun,James94,kumar81,Petersen,pra96,Ugrinovskii,whittle}, because often the true controlled distribution lies on a higher dimensional state space compared to the nominal controlled process distribution. 

The rest of the paper is organized as follows. In Section \ref{idp}, we give  a high level discussion on  classical dynamic programming for MCM and we present  some aspects of the problems and results obtained in the paper. In Section \ref{abs}, we describe the abstract formulation of the minimax problem under total variation distance ambiguity, and we derive the closed form expression of the maximizing measure. In Section~\ref{partially}, we apply the
abstract setup to Feedback Control Model (FCM) (e.g., non-Markov) and to MCM. We derive new dynamic programming recursions which characterize the optimality of minimax strategies. In Section~\ref{sec.dpihc}, we treat the infinite horizon case, where we show that the dynamic programming operator is contractive, and we develop a new policy iteration algorithm. Finally, in Section~\ref{sec.examples} we present various examples to illustrate the applications of the new dynamic programming recursions.

\subsection{Discussion on the Main Results}
\label{idp}
Next, we describe at a high level the results obtained in this paper.

\subsubsection{Dynamic Programming of Finite Horizon Discounted-Markov Control Model}
A finite horizon Discounted-Markov Control Model (D-MCM) with deterministic strategies is a septuple
\begin{equation}
\begin{multlined}\label{mcm}
\mbox{D-MCM}:\Big(\{ {\cal X}_i\}_{i=0}^n, \{ {\cal U}_i\}_{i=0}^{n-1}, \{ {\cal U}_i(x_i): x_i \in {\cal X}_i\}_{i=0}^{n-1}, \{ Q_i(dx_i| x_{i-1}, u_{i-1}):\\
  (x_{i-1}, u_{i-1}) \in {\cal X}_{i-1} \times {\cal U}_{i-1} \}_{i=0}^{n},\{f_i\}_{i=0}^{n-1}, h_n,\alpha\Big)
\end{multlined}\end{equation} consisting of

(a) State Space. A sequence of Polish spaces (complete separable metric spaces) $\{ {\cal X}_i: i=0, \ldots, n\}$, which model the state space of the controlled random process $\{x_j\in {\cal X}_j: j=0, \ldots, n\}$.

(b) Control or Action Space. A sequence of Polish spaces $\{ {\cal U}_i: i=0, \ldots, n-1\}$, which model the control or action set of the control random process $\{u_j\in {\cal U}_j: j=0, \ldots, n-1\}$.

(c) Feasible Controls or Actions. A family $\{ {\cal U}_i(x_i): x_i \in {\cal X}_i\}$ of non-empty measurable subsets ${\cal U}_i(x_i)$ of ${\cal U}_i$, where ${\cal U}_i(x_i)$ denotes the set of feasible controls or actions, when the controlled process is in state $x_i \in {\cal X}_i$, and  the feasible state-actions pairs defined by ${\mathbb K}_i \triangleq \Big\{ (x_i, u_i): x_i \in {\cal X}_i, u_i \in {\cal U}_i(x_i)\Big\} $ are measurable subsets of  ${\cal X}_i \times {\cal U}_i, i=0, \ldots, n-1$.

(d) Controlled Process Distribution. A collection of conditional distributions or stochastic kernels $Q_i(dx_i|x_{i-1}, u_{i-1})$ on ${\cal X}_i$
given $(x_{i-1}, u_{i-1}) \in {\mathbb K}_{i-1} \subseteq {\cal X}_{i-1} \times {\cal U}_{i-1}, i=0, \ldots, n$. The controlled process distribution  is described by the sequence of transition probability distributions $\{Q_i(dx_i|x_{i-1}, u_{i-1}): (x_{i-1},u_{i-1})\in {\mathbb K}_{i-1}, i=0, \ldots, n\}$.

(e) Cost-Per-Stage. A collection of non-negative measurable functions $f_j: {\mathbb K}_j \rar [0, \infty]$, called the cost-per-stage, such that $f_j(x,\cdot)$ does not take the value $+\infty$ for each $x \in {\cal X}_j, j=0, \ldots, n-1$. The running pay-off functional is defined in terms of $\{f_j:j=0,\hdots,n-1\}$.

(f) Terminal Cost. A bounded measurable non-negative function $h_n: {\cal X}_n \rar [0, \infty)$ called the terminal cost. The pay-off functional at the last stage is defined in terms of $h_n$.

(g) Discounting Factor. A real number $\alpha\in (0,1)$ called the discounting factor.

The definition of D-MCM envisions applications of systems described by discrete-time dynamical state space models, which include random external inputs, since such models give rise to a collection of controlled processes distributions $\{Q_i(dx_i|x_{i-1},u_{i-1}){:}(x_{i-1},u_{i-1})\in {\mathbb K}_{i-1}, i=0,\hdots,n\}$. For any integer $j\geq 0$, define the product spaces by ${\cal X}_{0,j}\triangleq\times_{i=0}^j{\cal X}_i$ and ${\cal U}_{0,j-1}\triangleq\times_{i=0}^{j-1}{\cal U}_i$. Define the discounted sample pay-off by\begin{align} F^\alpha_{0,n}(x_0,u_0,x_1,u_1,\ldots, x_{n-1},u_{n-1},x_n)\triangleq\sum_{j=0}^{n-1} \alpha^jf_j(x_j, u_j) +\alpha^n h_n(x_n). 
\end{align}

The goal in Markov controlled optimization with deterministic strategies is to choose a control strategy or policy
$g \triangleq \{g_j:j=0,1,\ldots,n-1\}$, $g_j:{\cal X}_{0,j}\times {\cal U}_{0,j-1}\longrightarrow {\cal U}_j(x_j)$, $u_j^g=g_j(x_0^g,x_1^g,\ldots,x_j^g,u_0^g,u_1^g,\ldots,u_{j-1}^g)$,  $j=0, 1, \ldots, n-1$
so as to minimize the pay-off functional
\begin{align} 
\label{tc}&{\mathbb E} \Big\{ \sum_{j=0}^{n-1} \alpha^jf_j(x_j^g, u_j^g) +\alpha^nh_n(x_n^g) \Big\} = \int_{ {\cal X}_0 \times {\cal X}_1\times \ldots \times{\cal X}_n}\nonumber\\
&\quad F^\alpha_{0,n}\Big(x_0,u_0^g(x_0),x_1,u_1^g(x_0,x_1),\ldots,x_{n-1},u_{n-1}^g(x_0,x_1,\ldots,x_{n-1}),x_n)  \\ &\qquad Q_0(dx_0)Q_1(dx_1|x_{0}, u_{0}^g(x_0))\ldots Q_n(dx_n|x_{n-1}, u_{n-1}^g(x_0,x_1,\ldots, x_{n-1} ) \Big).\nonumber
\end{align}
Clearly, pay-off (\ref{tc}) is a functional of the collection of conditional distributions $\{Q_i(\cdot|\cdot): i=0,1, \ldots, n\}$. Moreover, if this collection of distribution has countable support for each $(x_{i-1},u_{i-1})$, $i=0,\hdots,n$, then each integral in (\ref{tc}) is reduced to a countable summation.

For $(i, x) \in \{0,1 \ldots,n \} \times {\cal X}_i$, let $V_i^0(x) \in {\mathbb R}$ represent the minimal cost-to-go or value function on the time horizon $\{i, i+1, \ldots, n\}$ if the controlled process starts at state $x_i=x$ at time $i$, defined by \bea
V_i^0(x) \triangleq \inf_{\sr{ g_k \in {\cal U}_{k}(x_k)}{ k=i, \ldots, n-1}} {\mathbb E}^g_{i,x} \Big\{ \sum_{j=i}^{n-1} \alpha^jf_j(x_j^g, u_j^g) +\alpha^nh_n(x_n^g) \Big\} \label{v}
\eea
where ${\mathbb E}^g_{i,x}\{\cdot\}$ denotes expectation conditioned on $x_i^g=x$. A Markov property on the controlled process distributions, i.e., $Q_i(dx_i|x^{i-1},u^{i-1}){=}Q_i(dx_i|x_{i-1},u_{i-1})$, $\forall(x^{i-1},u^{i-1})\in \times_{j=0}^{i-1}{\mathbb K}_j$, $i=0,1,\hdots,n$, under admissible non-Markov strategies, implies that Markov control strategies are optimal \cite{varayia86}. Consequently, it can be shown that the value function (\ref{v}) satisfies the following dynamic programming recursion relating the value functions $V_i^0(\cdot)$ and $V_{i+1}^0(\cdot)$ \cite{varayia86},
\begin{align}
V_n^0(x) &= \alpha^nh_n(x), \hst x \in {\cal X}_n \label{dp2}\\
V_i^0(x) & = \inf_{ u \in {\cal U}_i(x)} \Big\{ \alpha^if_i(x, u) + \int_{{\cal X}_{i+1}} V_{i+1}^0(z) Q_{i+1}(dz| x,u)\Big\}, \hst x \in {\cal X}_i. \label{dp1} 
\end{align}
Since the value function $V_i^0(x)$ defined by (\ref{v}) and the dynamic programming recursion (\ref{dp2}), ({\ref{dp1}) depend on the complete knowledge of the collection of conditional distributions $\{ Q_i(\cdot| \cdot): i=0, \ldots, n\}$, any mismatch of the collection $\{ Q_i(\cdot| \cdot): i=0, \ldots, n\}$ from the true collection of conditional distributions, will affect the optimality of the control strategies. Our objective is to address the impact of any ambiguity measured by the total variation distance between the true conditional distribution and a given nominal distribution on the cost-to-go (\ref{v}), and dynamic programming recursion (\ref{dp2}), (\ref{dp1}).

\subsubsection{Dynamic Programming of Infinite Horizon D-MCM}
The infinite horizon D-MCM with deterministic strategies is a special case of the finite horizon D-MCM specified by a six-tuple
\begin{align}\label{ihdmcm}
\Big( {\cal X},  {\cal U}, \{ {\cal U}(x): x\in {\cal X}\},\{ Q(dz| x, u): (x, u) \in {\cal X} \times {\cal U} \},f,\alpha\Big)
\end{align}
where the elements defined under (a)-(f) are independent of time index $i$. That is, the state space is ${\cal X}$, the control or action space is ${\cal U}$, the feasible controls or actions is a family $\{{\cal U}(x):x\in{\cal X}\}\subset {\cal U}$, the controlled process distribution is a stochastic kernel $Q(\cdot|\cdot)$ on ${\cal X}$ given ${\mathbb K}$, where ${\mathbb K} \triangleq \Big\{ (x, u): x\in{\cal X}, u \in {\cal U}(x)\Big\}$, the cost-per-stage is a one stage cost $f:{\mathbb K}\longrightarrow[0,\infty]$, and there is no terminal cost (it is set to zero).

The dynamic programming equation of the infinite horizon D-MCM as given by \cite{vanSchuppen10} is a function $v_{\infty}^0:{\cal X}\longrightarrow \mathbb{R}$ satisfying 
\begin{align} v_{\infty}^0(x)=\inf_{u\in{\cal U}(x)}\Big\{f(x,u)+\alpha\int_{{\cal X}}v_{\infty}^0(z)Q(dz|x,u)\Big\},\hst x\in{\cal X}. \label{dpihdmcm}\end{align}
\begin{comment}
This equation is also obtained from (\ref{v}), and (\ref{dp1}), (\ref{dp2}), by assuming $V^0_n(x)=0$, $f_i=f$, ${\cal X}_i={\cal X}$, ${\cal U}_i={\cal U}$, $Q_i(\cdot|\cdot)=Q(\cdot|\cdot)$, $\forall i$, as follows. Define $v^0_i(x)\triangleq\alpha^{i-n}V_{n-i}^0(x)$ and subsequently find the equation for $v_i^0(x)$ from (\ref{dp1}), (\ref{dp2}), and then take the limit as $n\longrightarrow \infty$ to obtain $v_{\infty}^0(x)$ satisfying (\ref{dpihdmcm}), which implies 
\begin{align*} 
v_n^0(x)=\inf_{\sr{g_k\in {\cal U}(x_k)}{k=0,1,\hdots,n-1}}{\mathbb E}^g_{0,x}\Big\{\sum_{j=0}^{n-1}\alpha^jf(x_j^g,u_j^g)\Big\}, \ v_{\infty}^0(x)=\inf_{\sr{g_k\in {\cal U}(x_k)}{k=0,1,\hdots}}{\mathbb E}^g_{0,x}\Big\{\sum_{j=0}^{\infty}\alpha^jf(x_j^g,u_j^g)\Big\}.
\end{align*}
\end{comment}
Similarly to the finite horizon D-MCM, the dynamic programming equation (\ref{dpihdmcm}) depends on the conditional distribution $Q(dz|x,u)$, hence any ambiguity or mismatch of $Q(dz|x,u)$ from the true distribution affects optimality of the strategies.\\

\subsubsection{Dynamic Programming with Total Variation Distance Ambiguity}
 Motivated by the above discussion, the objective of this paper is to investigate dynamic programming under ambiguity of the conditional distributions of the controlled processes $$\Big\{ Q_i(dx_i| x_{i-1}, u_{i-1}): (x_{i-1}, u_{i-1}) \in {\mathbb K}_{i-1}\Big\},\hso i=0,\ldots, n.$$ The ambiguity of the conditional distributions of the controlled process is modeled by the total variation distance. Specifically, given a collection of nominal controlled process distributions
$\{ Q^o_i(dx_i| x_{i-1}, u_{i-1}): (x_{i-1}, u_{i-1}) \in {\mathbb K}_{i-1}\}$,  $i=0,\ldots, n$, the corresponding collection of true controlled process distributions $\{ Q_i(dx_i| x_{i-1}, u_{i-1}): (x_{i-1}, u_{i-1})\in {\mathbb K}_{i-1}\}$, $i=0, \ldots, n$, is modeled by a set described by the total variation distance centered at the nominal conditional distribution having radius $R_i \in [0,2]$, $i=0,\hdots,n$, defined by
\begin{equation}
\textbf{B}_{R_i}(Q^{o}_{{i}})  (x_{i-1},u_{i-1}) {\triangleq} \Big\{ { Q}_{{i}}(\cdot| x_{i-1},u_{i-1}){:}
|| { Q}_{{i}}(\cdot| x_{i-1},u_{i-1}) {-}Q^{o}_{{i}}(\cdot| x_{i-1},u_{i-1}) ||_{TV}  {\leq} R_i \Big\}.\nonumber
 \end{equation}
Here $|| \cdot ||_{TV}$ denotes the total variation distance between two probability measures, $|| \cdot ||_{TV}: {\cal M}_1(\Sigma) \times {\cal M}_1(\Sigma)\longmapsto [0,\infty]$ defined by 
\begin{equation}
||\alpha-\beta ||_{TV} \triangleq \sup_{ P \in {\cal P}(\Sigma)} \sum_{F_i \in P} |\alpha (F_i)- \beta(F_i)|,\hso \alpha, \beta\in {\cal M}_1(\Sigma)\label{totalv}
\end{equation}
where ${\cal M}_1(\Sigma)$ denotes the set of probability measures on ${\cal B}(\Sigma)$ and ${\cal P}(\Sigma)$ denotes the collection of all finite partitions of $\Sigma$. Note that the distance metric (\ref{totalv}) induced by the total variation norm does  not require absolute continuity of the measures $\alpha\in{\cal M}_1(\Sigma)$ and $\beta\in{\cal M}_1(\Sigma)$. The total variation distance model of ambiguity is quite general, and it includes linear, non-linear, finite and/or countable state space models, etc, since no assumptions are impossed on the structure of the stochastic control dynamical system model, which induces the collection of conditional distributions $\{Q_i(\cdot|\cdot):i=0,\ldots,n\}$, $\{Q^o_i(\cdot|\cdot):i=0,\ldots,n\}$. Given the above description of ambiguity in distribution, we re-formulate the value function and dynamic programming recursion via minimax theory as follows.

 For $(i, x) \in \{0,1 \ldots, n \} \times {\cal X}_i$, let $V_i(x) \in {\mathbb R}$ represent the minimal cost-to-go on the time horizon $\{i, i+1, \ldots, n\}$ if the state of the controlled process starts at state $x_i=x$ at time $i$, defined by
\begin{align*}
V_i(x) \triangleq \inf_{  \sr{g_k \in {\cal U}_{k}(x_k)}{ k=i, \ldots, n-1     }}  \sup_{ \sr{Q_{k+1}(\cdot| x_{k}, u_{k}) \in \textbf{B}_{R_{k+1}}(Q^o_{k+1})(x_k,u_k)}{k=i, \ldots, n-1}}    {\mathbb E}^g_{i,x} \Big\{ \sum_{j=i}^{n-1} \alpha^jf_j(x_j^g, u_j^g) +\alpha^nh_n(x_n^g) \Big\} 
\end{align*}
 where ${\mathbb E}^g_{i,x}$ denotes conditional expectation with respect to the true collection of conditional distribution $\{Q_k(\cdot|\cdot):k=i,\hdots,n\}$. Even in the above minimax setting the Markov property of the controlled process distribution under an admissible non-Markov strategy implies that Markov control strategies are optimal. Moreover, the value function satisfies the following dynamic programming recursion relating the value function $V_i(\cdot)$ and $V_{i+1}(\cdot)$, for all $i=0,1,\hdots,n-1$.
\begin{align*}
&V_n(x) =\alpha^nh_n(x), \quad\mbox{$x \in {\cal X}_n$} \\
& V_i(x)  = \inf_{\mathclap{ u \in {\cal U}_i(x)}} \ \ \ \sup_{ Q_{i+1}(\cdot| x_, u) \in \textbf{B}_{R_{i+1}}(Q^o_{{i+1}})(x,u)} \Big\{ \alpha^if_i(x, u) {+} \int_{\mathrlap{{\cal X}_{i+1}}} V_{i+1}(z) Q_{i+1}(dz| x,u)\Big\}, \ \mbox{$ x \in {\cal X}_i $}.
\end{align*}
Based on this formulation, if $V_{i+1}(\cdot)$ is bounded continuous non-negative, we show that the new dynamic programming equation is given by
\begin{eqnarray}
  \label{ndp2} V_n(x) &=&\alpha^nh_n(x), \quad\mbox{$ x \in {\cal X}_n$}\\
  V_i( x) &=&\inf_{u \in {\cal U}_i(x)}\Big\{\alpha^i f_i(x,u)+ \int_{{\cal X}_{i+1}}  V_{i+1}(z) Q^o_{i+1}(dz| x,u) \nonumber\\ [-1.5ex]\label{ndp1}\\[-1.5ex]
  &&+ \frac{R_i}{2} \Big( \sup_{z \in{\cal X}_{i+1}} V_{i+1}(z)- \inf_{z \in{\cal X}_{i+1}} V_{i+1}(z)  \Big)   \Big\},  \quad\mbox{$ x \in {\cal X}_i$}. \nonumber
\end{eqnarray}
Note that the new term in the right side of (\ref{ndp1}) is the oscillator seminorm of $V_{j+1}(\cdot)$ called the global modulus of continuity of $V_{j+1}(\cdot)$, which measures the difference between the maximum and minimum values of $V_{j+1}(\cdot)$.

For the infinite horizon D-MCM the new dynamic programming equation is given by
\begin{equation} \begin{multlined}
\label{ihDMCM}v_{\infty}(x)=\inf_{u\in{\cal U}(x)}\Big\{f(x,u)\\
+\alpha\int_{{\cal X}}v_{\infty}(z)Q^o(dz|x,u)
+\alpha\frac{R}{2}\Big (\sup_{z\in{\cal X}}v_{\infty}(z)-\inf_{z\in{\cal X}}v_{\infty}(z)\Big)\Big\},\quad\mbox{$ x\in{\cal X}$}.\end{multlined}
\end{equation}
For finite and countable alphabet spaces ${\cal X}_j$, ${\cal X}$, the integrals in the right hand side of (\ref{ndp1}), (\ref{ihDMCM}) are replaced by summations.
 
 In addition to the D-MCM, we will also discuss the general discounted feedback control model (i.e., we relax the Markovian assumption). In summary, the issues discussed and results obtained in this paper are the following:
(1) formulation of finite  horizon discounted stochastic optimal control  subject to conditional distribution ambiguity described by total variation distance via minimax theory;
(2) dynamic programming recursions for a) nominal D-MCM, and b) Discounted-Feedback Control Model (D-FCM), under total variation distance ambiguity on the conditional distribution of the controlled process;
(3) formulation of the infinite horizon D-MCM and dynamic programming equation under conditional distribution ambiguity described by total variation distance via minimax theory;
(4) characterization of the maximizing conditional distribution belonging to the total variation distance set, and the corresponding new dynamic programming recursions;
(5) contraction property of the infinite horizon D-MCM dynamic programming and new policy iteration algorithm;
(6) examples for the finite and infinite horizon cases.

\section{Maximization With Total Variation Distance Ambiguity}\label{abs}
In this section, we recall certain results from \cite{fcn2012} on the maximization of a linear functional on the space of probability distributions subject to total variation distance ambiguity. We use these results to derive the maximizing probability distribution subject to total variation distance ambiguity of the controlled process.

Let $(\Sigma,d_\Sigma)$ denote a complete, separable metric space (a Polish space), and $(\Sigma, {\cal B}(\Sigma))$ the corresponding measurable space, in which ${\cal B}(\Sigma)$ is the $\sigma$-algebra generated by open sets in $\Sigma$. Let ${\cal M}_1 (\Sigma)$ denote space of countably additive probability measures  on $(\Sigma, {\cal B}(\Sigma))$.  Define the spaces
\begin{align*}
&BC(\Sigma)\triangleq\big\{\mbox{Bounded continuous functions}\hse\ell:\Sigma\longrightarrow{\mathbb R}: ||\ell||\triangleq\sup_{x\in\Sigma}|\ell(x)|<\infty\big\}\\
&BM(\Sigma)\triangleq\big\{\mbox{Bounded measurable functions}\hse\ell:\Sigma\longrightarrow{\mathbb R}: ||\ell||<\infty\big\}\\
&C(\Sigma)\triangleq \big\{\mbox{Continuous functions}\hse\ell:\Sigma\longrightarrow{\mathbb R}: ||\ell||<\infty\big\},\quad C^+(\Sigma)\triangleq\big\{\ell\in C(\Sigma):\ell\geq 0\big\}\\
&BC^+(\Sigma)\triangleq\big\{\ell\in BC(\Sigma):\ell\geq 0\Big\},\hso BM^+(\Sigma)\triangleq\big\{\ell\in BM(\Sigma):\ell\geq 0\big\}.
\end{align*}
Clearly, $BC(\Sigma)$, $BM(\Sigma)$, $C(\Sigma)$ are Banach spaces. We present the maximizing measure for $\ell{\in} BC^+(\Sigma)$, although the results can be generalized to real-valued functions $\ell\in L^{\infty,+}(\Sigma, {\cal B}(\Sigma), \nu)$, the set of all ${\cal B}(\Sigma)$-measurable, non-negative  essentially bounded functions defined $\nu{-}a.e.$ endowed with the essential supremum norm $||\ell||_{\infty,\nu}{=}\nu \mbox{-} \mbox{ess }\sup_{x {\in} \Sigma} \ell(x)$.

From \cite{fcn2012}, we have the following. For $\ell\in BC^+(\Sigma)$, and $\mu\in {\cal M}_1(\Sigma)$ fixed, then \begin{equation}
 L(\nu^*)\triangleq \sup_{||\nu-\mu||_{TV}\leq R}\int_{\Sigma}\ell(x)\nu(dx)=\frac{R}{2}  \Big\{ \sup_{x \in {\Sigma}} \ell(x)  -  \inf_{x \in \Sigma} \ell(x) \Big\}    + \int_{\Sigma}\ell(x)\mu(dx)\label{f2n}
\end{equation} 
where $R\in[0,2]$, ${ \nu}^*$ satisfies the constraint $||{ \xi}^*||_{TV}= ||{ \nu}^*-{ \mu}||_{TV} =R$, it is normalized  ${ \nu}^*(\Sigma)=1$, and $\nu^*(A) \in [0,1]$ on any $A \in {\cal B}(\Sigma)$. Moreover, by defining \footnote{We adopt the standard definitions; infimum (supremum) of an empty set to be $+\infty$ ($-\infty$).}
\begin{align*}
&x^0 \in \Sigma^0  \triangleq  \{ x \in \Sigma: \ell(x) = \sup \{\ell(x): x\in \Sigma\} \equiv \ell_{\max}\} \\
&x_0 \in \Sigma_0  \triangleq \{ x \in \Sigma: \ell(x) = \inf\{\ell(x): x\in \Sigma\} \equiv \ell_{\min} \}
\end{align*}
then, the pay-off ${ L}({ \nu}^*)$ can be written as
\begin{align}
{ L}( \nu^*)&= \int_{\Sigma^0}     \ell_{\max}   \nu^*(dx) +   \int_{\Sigma_0}   \ell_{\min}     \nu^*(dx) +   \int_{\Sigma \setminus \Sigma^0\cup \Sigma_0} \ell(x) \mu(dx)  \label{n2}
\end{align}
and the optimal distribution ${\nu}^*\in {\cal M}_1 (\Sigma)$, which satisfy the total variation constraint, is given by
\begin{align}
\int_{\Sigma^0} \nu^*(dx)= \mu(\Sigma^0) + \frac{R}{2}\in[0,1]&, \qquad
 \int_{\Sigma_0} \nu^*(dx)= \mu(\Sigma_0) - \frac{R}{2}\in[0,1] \nonumber\\[-1.5ex]\label{cond1} \\[-1.5ex]
 \nu^*(A)= \mu(A)&, \hso \forall A \subseteq \Sigma \setminus \Sigma^0\cup \Sigma_0. \nonumber
\end{align}
Note that if $\Sigma^0=\Sigma_0=\{\emptyset\}$ then $\nu(\Sigma^0)=\nu(\Sigma_0)=0$, and $L(\nu^*)=\int_{\Sigma\setminus\Sigma^0\cup\Sigma_0}\ell(x)\mu(dx)$.

The second right hand side term in (\ref{f2n}) is related to the oscillator semi-norm of $f\in BM(\Sigma)$, called the global modulus of continuity, and it is defined by
\begin{align} \mbox{osc}(f)  & \triangleq \sup_{(x,y)\in\Sigma\times\Sigma}|f(x)-f(y)| =2\inf_{\beta\in \mathbb{R}}||f-\beta||,\hst\mbox{for}\hse f\in BM(\Sigma).\nonumber\end{align}
However, for $f\in BM^+(\Sigma)$ then \begin{align} \mbox{osc}(f)=\sup_{x\in\Sigma}|f(x)|-\inf_{x\in\Sigma}|f(x)|=\sup_{x\in\Sigma}f(x)-\inf_{x\in\Sigma}f(x). \nonumber\end{align}
Note that the above results can be extended to $f\in C^+(\Sigma)$.\\

{\it The Maximizing Measure for Finite and Countable Alphabet Spaces}

Here, we further elaborate on the form of the maximizing measures for finite and countable alphabet spaces, since we use them to analyze finite horizon D-MCM and D-FCM, and infinite horizon D-MCM with finite (or countable) state and control spaces.

Let $\Sigma$ be a non-empty denumerable set endowed with the discrete topology including finite cardinality $|\Sigma|$, with ${\cal M}_1(\Sigma)$ identified with the standard probability simplex in $\mathbb{R}^{|\Sigma|}$. That is, the set of all $|\Sigma|$-dimensional vectors which are probability vectors, $\{\nu(x):x\in\Sigma\}\in {\cal M}_1(\Sigma)$, $\{\mu(x):x\in\Sigma\}\in{\cal M}_1(\Sigma)$, and let $\ell \triangleq\{\ell(x):x\in\Sigma\}\in \mathbb{R}_+^{|\Sigma|}$. Define the maximum and minimum values of $\{\ell(x):x\in\Sigma\}$ by  
\begin{equation}
\ell_{\max}\triangleq \max_{x\in\Sigma}\ell(x),\quad \ell_{\min}\triangleq \min_{x\in\Sigma}\ell(x)\nonumber\end{equation}
 and its corresponding support sets by
\begin{equation}
 \Sigma^0 \triangleq \big\{x\in \Sigma:\ell(x)=\ell_{\max} \big\},\quad \Sigma_0\triangleq\big\{x\in \Sigma:\ell(x)=\ell_{\min} \big\}.\nonumber
\end{equation}
For all remaining sequence, $\big\{\ell(x):x\in \Sigma \setminus \Sigma^0\cup\Sigma_0\big\}$, and for $1\leq r\leq |\Sigma\setminus \Sigma^0\cup \Sigma_0|$, define recursively the set of indices for which the sequence achieves its $(k+1)^{th}$ smallest value by \begin{equation} \Sigma_k \triangleq\Big\{x\in \Sigma:\ell(x)=\min\big\{\ell(\alpha): \alpha \in \Sigma\setminus \Sigma^0\cup(\bigcup_{j=1}^k\Sigma_{j-1})\big\} \Big\},\hso k\in\{1,2,\hdots,r\}\nonumber\end{equation}
 till all the elements of $\Sigma$ are exhausted. Further, define the corresponding values of the sequence on sets $\Sigma_k$ by
 \bes \ell(\Sigma_k)\triangleq\min_{x\in\Sigma\setminus\Sigma^0\cup(\bigcup_{j=1}^k\Sigma_{j-1})}\ell(x),\hst k\in\{1,2,\hdots,r\}\ees
 where $r$ is the number of $\Sigma_k$ sets which is at most $|\Sigma\setminus\Sigma^0\cup\Sigma_0|$. For example, when $k=1$, $\ell(\Sigma_1)=\min_{x\in\Sigma\setminus\Sigma^0\cup\Sigma_0}\ell(x)$, when $k=2$, $\ell(\Sigma_2)=\min_{x\in\Sigma\setminus\Sigma^0\cup\Sigma_0\cup\Sigma_1}\ell(x)$ and so on.

 In \cite{ctlthem2013} it is shown that the maximum pay-off subject to total variation constraint is given by \begin{align} { L}( { \nu}^*)=\ell_{\max}\nu^*(\Sigma^0)+\ell_{\min}\nu^*(\Sigma_0)+\sum_{k=1}^r\ell(\Sigma_k)\nu^*(\Sigma_k), \label{mpoff1}   \end{align}
and that the optimal probabilities are given by (a water-filling) the following equations.
\begin{eqnarray}\label{all3}
&&\nu^*(\Sigma^0)\triangleq \sum_{x\in\Sigma^0}\nu^*(x)=\sum_{x\in\Sigma^0}\mu(x)+\frac{\alpha}{2}\equiv \mu(\Sigma^0)+\frac{\alpha}{2}\label{all3a}\\
&&\nu^*(\Sigma_0)\triangleq \sum_{x\in\Sigma_0}\nu^*(x)=\Big(\sum_{x\in\Sigma_0}\mu(x)-\frac{\alpha}{2}\Big)^+\equiv \Big(\mu(\Sigma_0)-\frac{\alpha}{2}\Big)^+\label{all3b}\\
&&\nu^*(\Sigma_k)\triangleq \sum_{x\in\Sigma_k} \nu^*(x)=\Big(\sum_{x\in\Sigma_k} \mu(x)-\Big(\frac{\alpha}{2}-\sum_{j=1}^k\sum_{x\in\Sigma_{j-1}}\mu(x)\Big)^+\Big)^+\nonumber\\ [-1.5ex]\label{all3c}\\[-1.5ex]
&& \qquad \qquad \qquad \qquad \quad \ \equiv \Big(\mu(\Sigma_k)-\Big(\frac{\alpha}{2}-\sum_{j=1}^k\mu(\Sigma_{j-1})\Big)^+\Big)^+\nonumber\\
&&\alpha \triangleq \min(R,R_{\max}),\ \ R_{\max}\triangleq 2(1-\sum_{x\in\Sigma^0}\mu(x))\equiv 2(1-\mu(\Sigma^0)),\ \  \mbox{ $R\in[0,2]$}\label{all3d}
\end{eqnarray}
where $k\in\{1,2,\ldots,r\}$ and $r$ is the number of $\Sigma_k$ sets which is at most $|\Sigma\setminus\Sigma^0\cup\Sigma_0|$.

The parameter $\alpha$ reinforces the intuitive notion of the total variation between the true and nominal probability distribution as having attributes similar to ``physical mass". Thus, if $\alpha=R_{\max}$, then \eqref{all3a} implies that the probability ``mass" on $\Sigma^0$ set is $\nu^*(\Sigma^0)=1$ and hence $\nu^*(\Sigma\setminus\Sigma^0)=0$. However, if $\alpha=R<R_{\max}$, then \eqref{all3a} implies that the probability ``mass" on $\Sigma^0$ set is $\nu^*(\Sigma^0)<1$ and hence equations \eqref{all3b}-\eqref{all3c} are employed. While $\nu^*(\Sigma_0)>0$, \eqref{all3c} implies that $\nu^*(\Sigma_k)=\mu(\Sigma_k)$ for all $k=1,\dots,r$. However, if $\nu^*(\Sigma_0)=0$, that is, all the probability ``mass" is removed from $\Sigma_0$, then the solution is obtained by moving further into the partition using \eqref{all3c}. For all $R\in[0,2]$, the resulting solution is described via a water-filling effect.

We are now equipped with the solution of maximizing linear functionals with total variation distance ambiguity for both finite, countable alphabets, and abstract alphabet spaces (Polish spaces), and therefore we are ready to apply these results to the dynamic programming recursion under ambiguity on the conditional distribution.

%%%%%%%%%%%%%%%%%%%%%%%%%%%%%%%%%%%%%%%%%%%%%%%%%%%%%%%%%%%%%%%%%%%%%%%%%%%%%%%%
\section{Minimax Stochastic Control with Total Variation Distance Ambiguity}
\label{partially}
In this section, we first introduce the general definition of finite horizon Discounted-Feedback Control Model (D-FCM) with randomized and deterministic control policies, under total variation distance uncertainty (which includes the D-MCM introduced in Section~\ref{idp}), and then we apply the characterization of the maximizing distribution of Section~\ref{abs} to the dynamic programming recursion. In the last section we discuss the infinite horizon D-MCM.
 
 Define $\mathbb{N}^n \triangleq \{0,1,2,\ldots, n\}, n \in \mathbb{N}$.  The state space and the control space are sequences of Polish spaces $\{ {\cal X}_j: j=0, 1, \ldots, n\}$ and $\{ {\cal U}_j: j=0, 1, \ldots, n-1\}$, respectively. These spaces are associated with their corresponding measurable spaces $({\cal X}_{j}, {\cal B}({\cal X}_{j})), \forall j \in \mathbb{N}^{n}$, $( {\cal U}_{j}, {\cal B}({\cal U}_{j}))$, $\forall j \in \mathbb{N}^{n-1}$. Define the product spaces by ${\cal X}_{0,n}{\triangleq} \times_{i=0}^n{\cal X}_i$, ${\cal U}_{0,n-1}{\triangleq} \times_{i=0}^{n-1}{\cal U}_i$, and introduce their product measurable spaces, $({\cal X}_{0,n}, {\cal B}({\cal X}_{0,n}))$, $({\cal U}_{0,n-1}, {\cal B}({\cal U}_{0,n-1}))$, respectively, for $ n \in \mathbb{N}^n$. The state process is denoted by  $x^n \triangleq \{ x_j : j =0,1, \ldots, n \}$, and the control process is denoted by  $u^{n-1}  \triangleq \{ u_j : j =0,1, \ldots, n-1 \}$. For any measurable spaces $({\cal X}, {\cal B}({\cal X})), ({\cal Y}, {\cal B}({\cal Y}))$, the set of stochastic Kernels on $({\cal Y}, {\cal B}({\cal Y}))$ conditioned on $({\cal X}, {\cal B}({\cal X}))$ is denoted by ${\cal Q}({\cal Y}|{\cal X})$.
 
 Given $({\cal X}_{0,n},{\cal B}({\cal X}_{0,n}))$, $({\cal U}_{0,n-1},{\cal B}({\cal U}_{0,n-1}))$ the Borel state and control or action spaces, respectively, and the initial state distribution $\nu_0(dx_0)$, we introduce the space $H_{0,n}$ of admissible observable histories by
\begin{align*}
  H_{0,n}\triangleq \mathbb{K}_0\times \mathbb{K}_1\times \hdots \times \mathbb{K}_{n-1}\times {\cal X}_n\equiv \times_{i=0}^{n-1}\mathbb{K}_i\times {\cal X}_n,\quad n\in\mathbb{N}, \quad H_{0,0}={\cal X}_0
\end{align*}
 where $\mathbb{K}_i\triangleq\left\{(x_i,u_i):x_i\in{\cal X}_i,u_i\in{\cal U}_i(x_i)\right\}$, denote the feasible state-action pairs, for $i=0,1,\hdots,n-1$. A typical element $h_{0,n}\in H_{0,n}$ is a sequence of the form \begin{align*}
  h_{0,n}=(x_0,u_0,\hdots,x_{n-1},u_{n-1},x_n),\quad (x_i,u_i)\in \mathbb{K}_i,\quad i=0,\hdots,n-1,\quad x_n\in{\cal X}_n.
\end{align*}
Similarly, introduce 
\begin{align*}
  G_{0,n}&={\cal X}_0\times{\cal U}_0\times\hdots\times{\cal X}_{n-1}\times{\cal U}_{n-1}\times{\cal X}_n\equiv\times_{i=0}^{n-1}({\cal X}_i\times{\cal U}_i)\times{\cal X}_n,\quad n\in\mathbb{N}\\
  G_{0,0}&=H_{0,0}={\cal X}_0.
\end{align*}
The spaces $G_{0,n}$ and $H_{0,n}$ are equipped with the natural $\sigma$-algebra ${\cal B}(G_{0,n})$ and ${\cal B}(H_{0,n})$, respectively.

Next, we give the precise definition of discounted feedback control model.\\

\begin{definition}
\label{dfcm}
A finite horizon D-FCM is a septuple
\begin{equation}
\begin{multlined}
\mbox{D-FCM}:\Big({\cal X}_{0,n},  {\cal U}_{0,n-1}, \{ {\cal U}_i(x_i): x_i \in {\cal X}_i\}_{i=0}^{n-1},\{ Q_i(dx_i| x^{i-1}, u^{i-1}):\\
  (x^{i-1}, u^{i-1}) \in {\cal X}_{0,i-1} \times {\cal U}_{0,i-1} \}_{i=0}^{n},\{f_i\}_{i=0}^{n-1}, h_n,\alpha\Big)
\end{multlined}\end{equation}
consisting of the items \textbf{(a)}-\textbf{(c)}, \textbf{(e)}-\textbf{(g)} of finite horizon D-MCM (\ref{mcm}), while the controlled process distribution in \textbf{(d)} is replaced by the non-Markov collection  $\{ Q_i(dx_i| x^{i-1}, u^{i-1}): (x^{i-1}, u^{i-1}) \in \times_{j=0}^{i-1}\mathbb{K}_j \}_{i=0}^{n}$.
\end{definition}

Next, we give the definitions of randomized, deterministic, and stationary control strategies or policies.\\

\begin{definition}
\label{policies}
A randomized control strategy is a sequence $\pi \triangleq \{ \pi_0, \ldots, \pi_{n-1} \}$ of stochastic kernels $\pi_i (\cdot| \cdot)$ on $({\cal U}_i, {\cal B}({\cal U}_i))$ conditioned on $(H_{0,i},{\cal B}(H_{0,i}))$ (e.g., $\pi_i (du_i| x^{i}, u^{i-1})$ ) satisfying
\bes
\pi_i({\cal U}_i(x_i)| x^{i}, u^{i-1})=1\quad \mbox{for every}\quad (x^{i}, u^{i-1}) \in H_{0,i},\quad i=0,1,\hdots,n-1.\ees
 The set of all such policies is denoted by $\mathbf{\Pi}_{0,n-1}$.
 
A strategy $\pi \triangleq \{\pi_i: i=0,\ldots, n-1\} \in \mathbf{\Pi}_{0,n-1}$ is called\\
\begin{comment}
\textbf{(a)} randomized Markov strategy if there exists a sequence $\left\{\pi_i^M(\cdot|\cdot):i=0,\hdots,n-1\right\}$ of stochastic kernels $\pi_i^M(\cdot|\cdot)$ on $\left({\cal U}_i,{\cal B}({\cal U}_i)\right)$ conditioned on $\left({\cal X}_i,{\cal B}({\cal X}_i)\right)$ such that
\bes \pi_i\left(\cdot|x^i,u^{i-1}\right)=\pi_i^M\left(\cdot|x_i\right),\quad \forall \left(x^i,u^{i-1}\right)\in H_{0,i},\quad i=0,1,\hdots,n-1.\ees
The set of randomized Markov strategies is denoted by $\mathbf{\Pi}_{0,n-1}^{RM}$;\\
\textbf{(b)} {\it  randomized stationary Markov strategy} if there exists a stochastic kernel $\pi^{RS}(\cdot|\cdot)$ on $({\cal U},{\cal B}({\cal U}))$ conditioned on $({\cal X},{\cal B}({\cal X}))$ such that \begin{align*}
  \pi_i(\cdot|x^i,u^{i-1})=\pi^s(\cdot|x_i),\quad \forall (x^i,u^{i-1})\in H_{0,i},\quad i=0,\hdots,n-1.
\end{align*}
 The set of randomized stationary Markov strategies is denoted by $\mathbf{\Pi}_{0,n-1}^{RS}$;
\end{comment}
\textbf{(a)} {\it  deterministic feedback  strategy} if there exists a sequence $g \triangleq \{g_j:j=0,1,\ldots,n-1\}$ of measurable functions  $g_j: \times_{i=0}^{j-1} {\mathbb K}_i \times  {\cal X}_{j} \longmapsto {\cal U}_j$, such that for all  $\left(x^{j}, u^{j-1}\right) \in H_{0,j}$, $j \in \mathbb{N}^{n-1}$,  $g_j(x_0,u_0,x_1,u_1,\ldots,x_{j-1},u_{j-1}, x_j ) \in {\cal U}_j(x_j)$,  and  $\pi_j\left(\cdot| x^{j}, u^{j-1}\right)$ assigns mass 1 to some point in ${\cal U}_j$, that is,
\bes
\pi_i\left(A_i| x^{i}, u^{i-1}\right)=I_{A_i}\left(g_i\left(x^{i}, u^{i-1}\right)\right), \quad \forall A_i \in {\cal B}({\cal U}_i),\quad i=0,1,\hdots,n-1,
\ees
where $I_{A_i}(\cdot)$ is the indicator function of $A_i \in {\cal B}({\cal U}_i)$. \\
The set of deterministic feedback strategies is denoted by $\mathbf{\Pi}_{0,n-1}^{DF}$;\\
\textbf{(b)} {\it  deterministic Markov   strategy} if there exists a sequence $g \triangleq \{g_j:j=0,1,\ldots,n-1\}$ of measurable functions  $g_j:   {\cal X}_{j} \rightarrow {\cal U}_j$ satisfying   $g_j(x_j) \in {\cal U}_j(x_j)$  for all  $x_{j} \in   {\cal X}_{j}$, $j \in \mathbb{N}^{n-1}$,    and  $\pi_j(\cdot| x^{j}, u^{j-1})$ is concentrated at $g_j(x_j) \in {\cal U}_j(x_j)$ for all $(x^{j}, u^{j-1}) \in H_{0,j}$, $j \in \mathbb{N}^{n-1}$.\\
The set of deterministic Markov strategies is denoted by $\mathbf{\Pi}_{0,n-1}^{DM}$;\\
\textbf{(c)} {\it  deterministic stationary Markov strategy} if there exists a measurable function $g:{\cal X}\longrightarrow {\cal U}$ such that $g(x_t)\in {\cal U}(x_t)$, $\forall x_t\in {\cal X}$, and $\pi_j(\cdot|x^j,u^{j-1})$ assigns mass to some point $u_j$, $\forall (x^j,u^{j-1})\in H_{0,j}$, e.g., \begin{align*}
  \pi_i(A_i|x^i,u^{i-1})=I_{A_i}(g(x_i)),\quad \forall A_i\in {\cal B}({\cal U}_i),\quad i=0,\hdots,n-1.
\end{align*}
The set of deterministic stationary Markov strategies is denoted by $\mathbf{\Pi}_{0,n-1}^{DS}$.
\end{definition}

According to Definition \ref{policies}, the set of control policies is non-empty, since we have assumed existence of measurable functions $g_j:{\mathbb K}_{0,j-1}\times{\cal X}_j\longrightarrow{\cal U}_j$ such that $\forall x^j,u^{j-1}\in {\mathbb K}_{0,j-1}\times {\cal X}_j$, $g_j(x^j,u^{j-1})\in {\cal U}_j({\cal X}_j), \forall j\in \mathbb{N}^{n-1}$. Sufficient conditions for this to hold are in general obtained via measurable selection theorems \cite{hern1996discrete}. For denumerable set (countable alphabet) ${\cal X}_j$ endowed with the discrete topology any function is measurable.
 Given a controlled process $ \left\{Q_i(\cdot|x^{i-1},u^{i-1}):(x^{i-1},u^{i-1})\in {\mathbb K}_{0,i-1}\right\}_{i=0}^n$
 and a randomized control process $ \left\{\pi_i(\cdot|x^{i},u^{i-1}):(x^{i},u^{i-1})\in {\mathbb K}_{0,i-1}\times {\cal X}_i\right\}_{i=0}^n\in\mathbf{\Pi}_{0,n-1}$
 and the initial probability $\nu_0(\cdot)\in {\cal M}_1({\cal X}_0)$, then by Ionescu-Tulceu theorem \cite{Bertsekas:2007:SOC:1512940} there exists a unique probability measure $\mathbf{Q}^{\pi}_{\nu}$ on $(\Omega,{\cal F})$ defined by
 \begin{equation}
 \begin{multlined} \label{upm1}\mathbf{Q}^{\pi}_{\nu}(dx_0,du_0,dx_1,du_1,\hdots,dx_{n-1},du_{n-1},dx_n)=Q_0(dx_0)\pi_0(du_0|x_0)\\ \otimes Q_1(dx_1|x_0,u_0)
 \pi_1(du_1|x^1,u_0)\otimes\hdots\otimes Q_{n-1}(dx_{n-1}|x^{n-2},u^{n-2})\\ \pi_{n-1}(du_{n-1}|x^{n-1},u^{n-2})\otimes Q_n(dx_n|x^{n-1},u^{n-1})
\end{multlined}  \end{equation}
such that
\begin{eqnarray*}
&&\mathbf{Q}^{\pi}_{\nu}(x_0\in A)=\nu(A),\quad\mbox{$ A\in{\cal B}({\cal X}_0)$}\\
&&\mathbf{Q}^{\pi}_{\nu}(u_j\in B|h_{0,j})=\pi_t(B|h_{0,j}),\quad\mbox{$ B\in{\cal B}({\cal U}_j)$}\\
&&\mathbf{Q}^{\pi}_{\nu}(x_{j+1}\in C|h_{0,j},u_t)=Q(C|h_{0,j},u_j),\quad\mbox{$ C\in {\cal B}({\cal X}_{j+1})$}.
\end{eqnarray*}
 Given the sample pay-off
 \begin{align} \label{samplepoff} F^\alpha_{0,n}(x_0,u_0,x_1,u_1,\hdots,x_{n-1},u_{n-1},x_n)\triangleq\sum_{j=0}^{n-1}\alpha^jf_j(x_j,u_j)+\alpha^nh_n(x_n)
 \end{align}
 its expectation is
 \begin{equation}
\begin{multlined}
{\mathbb E}_{\mathbf{Q}_{\nu}^{\pi}}\left\{ F^\alpha_{0,n}(x_0,u_0,x_1,u_1,\hdots,x_{n-1},u_{n-1},x_n)\right\}=
\int F^\alpha_{0,n}(x_0,u_0,x_1,u_1,\hdots,\\
x_{n-1},u_{n-1},x_n)
\mathbf{Q}^{\pi}_{\nu}(dx_0,du_0,dx_1,du_1,\hdots,dx_{n-1},du_{n-1},dx_n).
\end{multlined}\end{equation}
Note that the class of randomized strategies $\mathbf{\Pi}_{0,n-1}$ embeds deterministic feedback and Markov strategies.

\subsection{Variation Distance Ambiguity}
\label{subformulation}
Next, we introduce the definitions of nominal controlled process distributions (for finite horizon D-FCM and D-MCM), and their corresponding ambiguous controlled process distributions.

For each $\pi\in \mathbf{\Pi}_{0,n-1}^{DF} $, $\pi\in \mathbf{\Pi}_{0,n-1}^{DM} $ and $\pi\in \mathbf{\Pi}_{0,n-1}^{DS} $ the nominal controlled process is described by a sequence of conditional distributions as follows.\\

\begin{definition} (Nominal Controlled Process Distributions).
\label{dt}
A nominal controlled state processes $\{x^g=x_0^g, x_1^g, \ldots, x_n^g: \pi \in \mathbf{\Pi}_{0,n-1}^{DF},    \hso \pi \in \mathbf{\Pi}_{0,n-1}^{DM},\hso \mbox{or}\hso \pi \in \mathbf{\Pi}_{0,n-1}^{DS}\}$  corresponds to  a sequence of stochastic kernels as follows:\\
\textbf{(a)} {\it Feedback Controlled Process.} 
\bes \mbox{For every}\hso A \in {\cal B}({\cal X}_{j}),\hso
Prob(x_{j} \in A| x^{j-1},u^{j-1}   )= Q^o_{j}(A| x^{j-1}, u^{j-1})
\ees
where $Q^o_{j}(A| x^{j-1}, u^{j-1}) \in {\cal Q}({\cal X}_{j} | \mathbb{K}_{0,j-1}), \forall j\in \mathbb{N}_+^{n}$.\\
\textbf{(b)}  {\it  Markov Controlled Process.}
\begin{align*}\mbox{For every}\hso A \in {\cal B}({\cal X}_{j}),\hso
Prob(x_j \in A| x^{j-1}, u^{j-1}    )= Q^o_j(A| x_{j-1}, u_{j-1})
\end{align*}
where $Q^o_{j}(A| x_{j-1}, u_{j-1}) \in {\cal Q}({\cal X}_{j} | \mathbb{K}_{j-1}), \forall j\in \mathbb{N}_+^{n}$.\\
\textbf{(c)}  {\it  Stationary Markov Controlled Process.}
\begin{align*}\mbox{For every}\hso A \in {\cal B}({\cal X}),\hso
Prob(x_j \in A| x^{j-1}, u^{j-1}    )= Q^o(A| x_{j-1}, u_{j-1})
\end{align*}
where $Q^o(A|x_{j-1}, u_{j-1}) \in {\cal Q}({\cal X} | \mathbb{K})$.\\
\end{definition}

The class of controlled processes is described by the sequence of stochastic kernels, \bes\{{ Q}_{j}(dx_{j}|x^{j-1},u^{j-1})\in  {\cal Q}({\cal X}_{j} | \mathbb{K}_{0,j-1}: j=0, \ldots, n\}\ees belonging to a total variation distance set as follows.\\

\begin{definition}(Class of Controlled Process Distribution)
\label{type2}
Given a nominal controlled process stochastic kernel of Definition \ref{dt}, and  $R_i \in [0,2], 0\leq i\leq n$ the class of controlled process stochastic kernels is defined as follows: \\
\textbf{ (a)} {\it Class with respect to Feedback Nominal Controlled Process.} \\
 Given a fixed $Q^o_{{j}}(\cdot| x^{j-1}, u^{j-1}) \in {\cal Q}({\cal X}_{j} | \mathbb{K}_{0,j-1})$, $j=0,1,\hdots,n$ the class of stochastic kernels is defined by
\begin{multline*}
  \mathbf{B}_{R_i}(Q^o_{{i}}) (x^{i-1},u^{i-1}) \triangleq \Big\{ { Q}_{{i}}(\cdot| x^{i-1},u^{i-1}) \in {\cal Q}({\cal X}_i|\mathbb{K}_{0,i-1}):\\
  || { Q}_{{i}}(\cdot| x^{i-1},u^{i-1}) -Q^o_{{i}}(\cdot| x^{i-1},u^{i-1}) ||_{TV}  \leq R_i \Big\},\quad i=0,1,\ldots, n. 
\end{multline*}
\textbf{ (b)} {\it Class with respect to Markov Nominal Controlled Process.}\\
 Given a fixed $Q^o_{{j}}(\cdot| x_{j-1}, u_{j-1}) \in {\cal Q}({\cal X}_{j} |  \mathbb{K}_{j-1})$, $j=0,1,\hdots,n$ the class of stochastic kernels is defined by
\begin{multline*}
  \mathbf{B}_{R_i}(Q^o_{{i}})  (x^{i-1},u^{i-1}) \triangleq \Big\{ { Q}_{{i}}(\cdot| x^{i-1},u^{i-1}) \in {\cal Q}({\cal X}_i|\mathbb{K}_{0,i-1}): \\
     || { Q}_{{i}}(\cdot| x^{i-1},u^{i-1}) -Q^o_{{i}}(\cdot| x_{i-1},u_{i-1}) ||_{TV}  \leq R_i \Big\},\quad i=0,1,\ldots, n. 
\end{multline*}
\textbf{ (c)} {\it Class with respect to Stationary Markov Nominal Controlled Process.}\\
 Given a fixed $Q^o(\cdot| x_{j-1}, u_{j-1}) \in {\cal Q}({\cal X} , \mathbb{K})$ the class of stochastic kernels is defined by
\bes
  \mathbf{B}_{R}(Q^0)  (x,u) \triangleq \Big\{ Q(\cdot| x,u) \in {\cal Q}({\cal X}|\mathbb{K}): 
     || Q(\cdot| x,u) -Q^o(\cdot| x,u) ||_{TV}  \leq R \Big\}.
\ees
\end{definition}
Note that in Definition \ref{type2} (a), (b), although we use the same notation $\mathbf{B}_{R_i}(Q^o_{{i}})  (x^{i-1},u^{i-1})$ these sets are different because the nominal distribution $Q^o_i(\cdot|\cdot)$ can be of Feedback or Markov form. The above model is motivated by the fact that dynamic programming involves conditional expectation with respect to the collection of conditional distributions $\{{ Q}_{i}(\cdot| x^{i-1},u^{i-1})$ $\in {\cal Q}({\cal X}_i|\mathbb{K}_{0,i-1}): i=0, \ldots, n\}$. Therefore, any ambiguity in these distributions will affect the optimality of the strategies.

\subsection{Pay-Off Functional}

  For each $\pi \in \mathbf{\Pi}_{0,n-1}^{DF}$ or $\pi \in \mathbf{\Pi}_{0,n-1}^{DM}$ the average pay-off is defined by
\bea
J_{0,n}(\pi,Q_i:i=0,\hdots,n)\triangleq {\mathbb E}_{\mathbf{Q}^{\pi}_{\nu}}\Big\{ \sum_{j=0}^{n-1} \alpha^jf_j(x_j,u_j) + \alpha^nh_n(x_n)\Big\} \label{dt2}
\eea
where ${\mathbb E}_{\mathbf{Q}^{\pi}_{\nu}} \{\cdot\}$ denotes expectation with respect to the true joint measure $\mathbf{Q}_{\nu}^{\pi}(dx^n,du^{n-1})$ defined by (\ref{upm1}) such that $Q_i(\cdot|x^{i-1},u^{i-1})\in \mathbf{B}_{R_i}(Q^o_i)$, $i=0,1,\hdots,n$ (e.g., it belongs to the total variation distance ball of Definition \ref{type2}).

Next, we introduce assumptions so that the maximization over the class of ambiguous measures is well-defined.\\

\begin{assumption}
\label{ass4.1}
The nominal system family satisfies the  following assumption: The maps $\{f_{j}: {\cal X}_j \times {\cal U}_j \longmapsto \mathbb{R}: j=0, 1, \ldots, n-1\}$, $h_{n}: {\cal X}_n \longmapsto \mathbb{R}$  are bounded, continuous and non-negative.
\end{assumption}

Note that it is possible to relax Assumption \ref{ass4.1} to lower semi-continuous non-negative functions bounded from below.

\subsection{Minimax Dynamic Programming for Finite Horizon D-FCM and D-MCM}
\label{dp}
In this section we shall apply the results of Section \ref{abs} to formulate and solve minimax stochastic control under a) finite horizon D-FCM ambiguity, and b) finite horizon D-MCM ambiguity.

\subsubsection{Dynamic Programming for Finite Horizon D-FCM Subject to Ambiguity}
Utilizing the above formulation, next we define the minimax stochastic control problem, where the maximization is over a total variation distance ball, centered at the nominal conditional distribution $Q^o_{i}(dx_i| x^{i-1},u^{i-1})  \in {\cal Q}({\cal X}_{i}| \mathbb{K}_{0,i-1})$ having radius $R_i \in [0,2]$, for $i=0,1,\ldots, n$.\\

\begin{problem}
\label{def4.2nn}
Given a nominal feedback controlled process of Definition~\ref{dt} (a), an admissible policy set $\mathbf{\Pi}_{0,n-1}^{DF}$ and an ambiguity class  $\mathbf{B}_{R_{k}}(Q^o_{k})(x^{k-1},u^{k-1}), k{=}0,...,n$ of Definition \ref{type2} (a), find a ${\pi }^* {\in} \mathbf{\Pi}_{0,n-1}^{DF}$ and a sequence of stochastic kernels
${ Q}_{k}^{*}(dx_{k}| x^{k-1}, u^{k-1})  \in \mathbf{B}_{R_{k}}(Q^o_{k})(x^{k-1},u^{k-1})$, $k=0,1,...,n$  which solve the following minimax optimization problem.
\begin{equation}
\begin{multlined}
  J_{0,n}(\pi^*,{Q}^{*}_{k}:k=0,\hdots,n)  
  = \inf_{\pi \in \mathbf{\Pi}_{0,n-1}^{DF}}\Big\{ \\
   \sup_{{\sr  {{ Q}_{k}(\cdot| x^{k-1}, u^{k-1})  \in \mathbf{B}_{R_k}(Q^o_k)(x^{k-1},u^{k-1})}{ k=0,1,\ldots, n}}   } {\mathbb E}_{\mathbf{Q}^{\pi}_{\nu}} \Big\{\sum_{k=0}^{n-1} \alpha^kf_k(x_k^g,u_k^g) + \alpha^nh_n(x_n^g) \Big\}\Big\}.\label{eq4.20a}
\end{multlined}
\end{equation}
\end{problem}
Next, we apply dynamic programming to characterize the solution of (\ref{eq4.20a}), by first addressing the maximization. Define the pay-off associated with the maximization problem
\begin{equation*}
J_{0,n}(\pi, {Q}^{*}_{k}:k=0,\hdots,n)\triangleq   \sup_{       {\sr  {{ Q}_{k}(\cdot| x^{k-1},u^{k-1})  \in \mathbf{B}_{R_k}(Q^o_k)(x^{k-1},u^{k-1})}{ k=0,1,\ldots, n}}   } J_{0,n}(\pi,Q_{k}:k=0,\hdots,n).
\end{equation*}
For a given $\pi\in\mathbf{\Pi}_{0,n-1}^{DF}$, which defines $\{g_j:j=0,\ldots,n-1 \}$, and $\pi_{[k,m]}\equiv u^g_{[k,m]}$, denoting the restriction of policies in $[k,m]$, $0\leq k\leq m\leq n-1$, define the conditional expectation taken over the events ${\cal G}_{0,j}\triangleq \sigma\{x_0^g, \ldots, x_j^g, u_0^g, \ldots, u_j^g\}$ maximized over the class $\mathbf{B}_{R_k}(Q^o_k)( x^{k-1}, u^{k-1})$, $k=j+1,\ldots, n$, as follows \cite{caines88,varayia86}:
\begin{equation}
\begin{multlined}\label{eq4.2300aa}
V_j(u^g_{[j,n-1]},{\cal G}_{0,j})\triangleq \sup_{       {\sr  {Q_{k}(\cdot| x^{k-1}, u^{k-1})  \in \mathbf{B}_{R_k}(Q^o_k)( x^{k-1}, u^{k-1})}{ k= j+1,\ldots, n}}}  \mathbb{E}_{\mathbf{Q}^{\pi}_{\nu}} \Big\{\sum_{k=j}^{n-1} \alpha^kf_k(x_k^g,u_k^g) \\+ \alpha^nh_n(x_n^g) | {\cal G}_{0,j}  \Big\}
\end{multlined}
\end{equation}
where $\mathbb{E}_{\mathbf{Q}^{\pi}_{\nu}}\{\cdot|{\cal G}_{0,j}\}$ denotes conditional expectation with respect to ${\cal G}_{0,j}$ calculated on the probability measure $\mathbf{Q}^{\pi}_{\nu}$. Then, $V_j(u^g_{[j,n-1]},{\cal G}_{0,j})$ satisfies the following dynamic programming equation \cite{varayia86},
\begin{eqnarray}
V_n({\cal G}_{0,n})&=&\alpha^nh_n(x_n^g)\label{eq4.230a}\\
V_j(u^g_{[j,n-1]},{\cal G}_{0,j})&=& \sup_{  {Q_{j+1}(\cdot| x^{j}, u^{j})  \in \mathbf{B}_{R_{j+1}}(Q^0_{j+1})(x^j,u^j)}}\Big\{\nonumber\\[-1.5ex]\label{eq4.230}\\[-1.5ex]
&&\mathbb{E}_{{Q}_{j+1}(\cdot| x^{j}, u^{j})}\Big\{\alpha^jf_j(x_j^g,u_j^g)+V_{j+1}(u^g_{[j+1,n-1]},{\cal G}_{0,j+1})  \Big\}\Big\}\nonumber
\end{eqnarray}
 where $\mathbb{E}_{{Q}_{j+1}(\cdot| x^j, u^j)}\{\cdot\}$ denotes expectation with respect to ${Q}_{j+1}(dx_{j+1}| \mathbb{K}_{0,j})$.

Next, we present the dynamic programming recursion for the minimax problem. Let $V_j({\cal G}_{0,j})$ represent the minimax pay-off on the future time horizon $\{j,j+1,...,n\}$  at time $j\in {\mathbb N}^n_+$ defined by
\begin{equation}
\begin{multlined}
\label{eq4.201}
 V_j({\cal G}_{0,j}) \triangleq \inf_{\pi \in \mathbf{\Pi}_{j,n-1}^{DF}} \sup_{       {\sr  {Q_{k}(\cdot| x^{k-1}, u^{k-1})  \in \mathbf{B}_{R_k}(Q^0_{k})( x^{k-1}, u^{k-1})}{ k=j+1,\ldots, n}}   } \Big\{    \\ \mathbb{E}_{\mathbf{Q}^{\pi}_{\nu}} \Big\{\sum_{k=j}^{n-1} \alpha^kf_k(x_k^g,u^g_k) + \alpha^nh_n(x_n^g) | {\cal G}_{0,j}  \Big\} \Big\}
 = \inf_{\pi \in \mathbf{\Pi}_{j,n-1}^{DF}}V_j(u^g_{[j,n-1]},{\cal G}_{0,j}).
 \end{multlined}
\end{equation}
Then by reconditioning we obtain
\begin{equation}
\begin{multlined}
  V_j({\cal G}_{0,j}) \triangleq \inf_{u \in {\cal U}_{ad}[j,n-1]} \sup_{       {\sr  {Q_{k}(\cdot| x^{k-1}, u^{k-1})  \in \mathbf{B}_{R_k}(Q^0_{k})( x^{k-1}, u^{k-1} )}{ k=j+1,\ldots, n}}   } \Big\{ 
   \\ \mathbb{E}_{\mathbf{Q}^{\pi}_{\nu}} \Big\{ \alpha^jf_j(x_j^g,u^g_j)  
    + \mathbb{E}_{\mathbf{Q}^{\pi}_{\nu}} \Big\{  \sum_{k=j+1}^{n-1} \alpha^kf_k(x_k^g,u^g_k)  +\alpha^n h_n(x_n^g) | {\cal G}_{0,j+1}   \Big\}  | {\cal G}_{0,j} \Big\}  \Big\}.
\label{eq4.202}
\end{multlined}
\end{equation}
Hence, we deduce the following dynamic programming recursion
\begin{eqnarray}
V_n({\cal G}_{0,n})&=&\alpha^nh_n(x_n^g)\label{eq4.205}\\
\label{eq4.204} V_j({\cal G}_{0,j}) &\triangleq & \adjustlimits \inf_{u_j \in {\cal U}_{j}(x)} \sup_{ Q_{j+1}(\cdot|x^{j}, u^{j})  \in \mathbf{B}_{R_{j+1}}(Q^0_{j+1})(  x^{j}, u^{j})  } \Big\{   \nonumber\\ [-1.5ex]\\[-1.5ex] 
 && \mathbb{E}_{{Q}_{j+1}(\cdot| x^j, u^j)} \Big\{ \alpha^jf_j(x_j^g,u_j^g)  +  V_{j+1}({\cal G}_{0,j+1})   \Big\}\Big\}.\nonumber  
\end{eqnarray}
By applying the results of Section \ref{abs} to (\ref{eq4.205}), (\ref{eq4.204}) we obtain the following theorem.\\

\begin{theorem}\label{th411}
Suppose there exist an optimal policy for Problem \ref{def4.2nn}, and assume $V_{j+1}(\cdot){:} {\cal X}_{0,j+1} {\times} {\cal U}_{0,j}{\longmapsto} [0,\infty)$ in (\ref{eq4.201}) is bounded continuous in $x {\in} {\cal X}_{j+1}$, $j=0,\hdots,n{-}1$.

 1) The dynamic programming recursion is given by
\begin{eqnarray}
V_n({\cal G}_{0,n})&=&\alpha^nh_n(x_n^g)\label{eq4.230aaa}\\
V_j({\cal G}_{0,j})&=&\inf_{u_j\in {\cal U}_j(x)}\Big\{\mathbb{E}_{Q^o_{j+1}}\Big(\alpha^jf_j(x_j^g,u_j^g)
 +V_{j+1}({\cal G}_{0,j+1})|{\cal G}_{0,j}\Big)\nonumber\\[-1.5ex]\label{eq4.230aa}\\[-1.5ex]
 &&+\frac{R_j}{2}\Big(\sup_{x_{j+1} \in {\cal X}_{j+1}} V_{j+1}({\cal G}_{0,j+1})-\inf_{x_{j+1} \in {\cal X}_{j+1}} V_{j+1}({\cal G}_{0,j+1})\Big)\Big\}. \nonumber
\end{eqnarray}
%& \label{eq4.230aa}V_j({\cal G}_{0,j})=\inf_{u_j\in {\cal U}_j(x)}\Bigg\{\frac{R_j}{2}\sup_{x_{j+1} \in {\cal X}_{j+1}}\Big\{ V_{j+1}({\cal G}_{0,j+1})\Big\} \\
%&-\frac{R_j}{2}\inf_{x_{j+1} \in {\cal X}_{j+1}}\Big\{ V_{j+1}({\cal G}_{0,j+1})\Big\}   + \mathbb{E}_{P_{j+1}}\Big\{\alpha^jf_j(x_j^g,u_j^g)
% +V_{j+1}({\cal G}_{0,j+1})|{\cal G}_{0,j}\Big\}\Bigg\}, \nonumber\\
%&  V_n({\cal G}_{0,n})=\alpha^nh_n(x_n^g).\label{eq4.230aaa}
%\end{align}
Moreover,
\begin{equation}
V_j({\cal G}_{0,j})=\inf_{u_j\in {\cal U}_j(x)}\mathbb{E}_{Q^*_{j+1}}\Big\{\alpha^jf_j(x_j^g,u_j^g)
 +V_{j+1}({\cal G}_{0,j+1})|{\cal G}_{0,j}\Big\}
\end{equation}
where, the optimal conditional distributions $\{Q_j^*:j=0,1,\hdots,n-1\}$ are given by
\begin{eqnarray}
&&Q^*_{j+1}\left({\cal X}^+_{j+1}|x^j,u^j\right)=Q^o_{j+1}({\cal X}_{j+1}^+| x^{j}, u^j ) + \frac{R_{j+1}}{2}\in[0,1],\ \ \mbox{$(x^j,u^j){\in} \mathbb{K}_{0,j}$} \label{pc1} \\
&&Q^*_{j+1}\left({\cal X}^-_{j+1}|x^j,u^j\right)=Q^o_{j+1}({\cal X}^-_{j+1}| x^j, u^j) - \frac{R_{j+1}}{2}\in[0,1],\ \ \mbox{$(x^j,u^j){\in} \mathbb{K}_{0,j}$} \label{pc1a}\\
&&Q^*_{j+1}\left(A|x^j,u^j\right)=Q^o_{j+1}(A| x^j, u^j),\ \forall A{\subseteq}  {\cal X}_{j+1}{\setminus} {\cal X}^+_{j+1}{\cup}{\cal X}^-_{j+1},\ \ \mbox{$(x^j,u^j){\in} \mathbb{K}_{0,j}$}\label{pc1aa}
\end{eqnarray}
and \footnote{Note the notation $\Sigma^0$ and $\Sigma_0$ in Section \ref{abs} is identical to the notation ${\cal X}^+_{j+1}$ and ${\cal X}^-_{j+1}$, respectively.}
\begin{eqnarray}
 {\cal X}_{j+1}^+  &\triangleq & \Big\{ x_{j+1} \in {\cal X}_{j+1}{:}V_{j+1}({\cal G}_{0,j+1}) {=} \sup \Big\{ V_{j+1}({\cal G}_{0,j+1}){:} x_{j+1} {\in} {\cal X}_{j+1} \Big\}  \Big\} \\
   {\cal X}_{j+1}^- &\triangleq & \Big\{ x_{j+1} \in {\cal X}_{j+1}{:}   V_{j+1}({\cal G}_{0,j+1}){=} \inf \Big\{ V_{j+1}({\cal G}_{0,j+1}){:} x_{j+1} {\in} {\cal X}_{j+1} \Big\}  \Big\}. \label{pc2}
\end{eqnarray}

2) The total pay-off is given by
\begin{equation} J_{0,n}(\pi^*,{ Q}^{\ast}_{i}:i=0,\hdots,n-1)= \sup_{ Q_0(\cdot)  \in \mathbf{B}_{R_0}(Q^o)}  \mathbb{E}_{Q_{0}}   \Big\{V_0({\cal G}_{0,0})\Big\}. \label{eq4.230aaaaa}
\end{equation}
\end{theorem}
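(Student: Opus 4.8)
The plan is to obtain all three parts by recognizing the inner supremum in the already-established recursion (\ref{eq4.205})--(\ref{eq4.204}) (which is justified by the reconditioning argument leading to (\ref{eq4.202})) as a single instance of the abstract maximization (\ref{f2n}). Fix a stage $j$, a realized history generating ${\cal G}_{0,j}$, and a candidate action $u_j$. Since $f_j(x_j^g,u_j^g)$ is ${\cal G}_{0,j}$-measurable and the control is committed \emph{before} nature selects the kernel, the only part of the integrand in (\ref{eq4.204}) that depends on $Q_{j+1}$ is $\int_{{\cal X}_{j+1}}V_{j+1}({\cal G}_{0,j+1})\,Q_{j+1}(dz|x^j,u^j)$. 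Thus, with the frozen history and action, the inner supremum is exactly $\sup_{\|\nu-\mu\|_{TV}\le R}\int\ell\,d\nu$ under the identifications $\Sigma={\cal X}_{j+1}$, $\ell(\cdot)=V_{j+1}({\cal G}_{0,j+1})$ (viewed as a function of $x_{j+1}$), $\mu=Q^o_{j+1}(\cdot|x^j,u^j)$ and $R=R_{j+1}$. No interchange of $\inf$ and $\sup$ is needed, since the abstract result solves the inner maximization for each fixed $u_j$ and we take $\inf_{u_j}$ afterward.

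First I would check that the hypotheses of (\ref{f2n}) hold: by assumption $V_{j+1}$ is bounded, continuous and nonnegative in $x_{j+1}$, hence $V_{j+1}\in BC^+({\cal X}_{j+1})$ and the result of Section~\ref{abs} applies verbatim, giving
\begin{align*}
\sup_{Q_{j+1}\in\mathbf{B}_{R_{j+1}}(Q^o_{j+1})}\int_{{\cal X}_{j+1}}V_{j+1}\,Q_{j+1}(dz|x^j,u^j)
&=\int_{{\cal X}_{j+1}}V_{j+1}\,Q^o_{j+1}(dz|x^j,u^j)\\
&\quad +\frac{R_{j+1}}{2}\Big(\sup_{x_{j+1}\in{\cal X}_{j+1}}V_{j+1}-\inf_{x_{j+1}\in{\cal X}_{j+1}}V_{j+1}\Big).
\end{align*}
Restoring the ${\cal G}_{0,j}$-measurable term $\alpha^jf_j$ and taking $\inf_{u_j}$ yields (\ref{eq4.230aa}), while the terminal line (\ref{eq4.230aaa}) is just (\ref{eq4.205}). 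The maximizing-kernel formulas (\ref{pc1})--(\ref{pc1aa}) are then read off directly from (\ref{cond1}) under the correspondence $\Sigma^0\leftrightarrow{\cal X}^+_{j+1}$, $\Sigma_0\leftrightarrow{\cal X}^-_{j+1}$ recorded in the footnote, with the degenerate cases (empty level sets, saturation of the mass constraint) covered by the $\pm\infty$ convention and the remark following (\ref{cond1}). Part~2) is then immediate: since Section~\ref{abs} constructs an explicit optimizer $\nu^*$, the supremum in (\ref{eq4.204}) is \emph{attained} at $Q^*_{j+1}$ of (\ref{pc1})--(\ref{pc2}), so the inner $\sup$ may be replaced by expectation under $Q^*_{j+1}$. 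Part~3) follows by unrolling the recursion down to $j=0$: $V_0({\cal G}_{0,0})$ already encodes the minimax value over stages $1,\dots,n$ conditioned on $x_0$, and maximizing over the residual ambiguity $Q_0\in\mathbf{B}_{R_0}(Q^o)$ in the initial distribution gives (\ref{eq4.230aaaaa}).

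The only genuine subtlety, and the step I would treat most carefully, is the well-posedness of the abstract maximization at \emph{every} level of the backward induction: one must guarantee that $\ell=V_{j+1}$ indeed lies in $BC^+({\cal X}_{j+1})$ so that $\sup V_{j+1}$ and $\inf V_{j+1}$ are finite and the closed-form maximizer is a bona fide probability measure in $\mathbf{B}_{R_{j+1}}(Q^o_{j+1})$. Boundedness and nonnegativity of $f_j,h_n$ (Assumption~\ref{ass4.1}), together with the assumed bounded continuity of each $V_{j+1}$, supply exactly this; propagating the properties backward is what closes the induction, since integration against a stochastic kernel and the oscillator term preserve boundedness and nonnegativity. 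On a non-compact Polish ${\cal X}_{j+1}$ the sup or inf of $V_{j+1}$ need not be attained, so ${\cal X}^+_{j+1}$ or ${\cal X}^-_{j+1}$ may be empty; this is precisely the case absorbed by the $\pm\infty$ convention and the note after (\ref{cond1}), so the formulas (\ref{eq4.230aa}) and (\ref{pc1})--(\ref{pc2}) remain valid without attainment.
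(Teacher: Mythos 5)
Your proposal is correct and takes essentially the same route as the paper's own proof: rewrite (\ref{eq4.204}) in integral form so that, for each fixed $u_j$, the inner supremum is exactly the Section~\ref{abs} linear-functional maximization with $\ell=V_{j+1}$, $\mu=Q^o_{j+1}(\cdot|x^j,u^j)$, apply that result to obtain (\ref{eq4.230aaa}), (\ref{eq4.230aa}) and (\ref{pc1})--(\ref{pc2}), and evaluate (\ref{eq4.201}) at $j=0$ for the total pay-off (\ref{eq4.230aaaaa}). The only differences are cosmetic: the paper invokes the discrete water-filling form (\ref{all3}) where you invoke the abstract version (\ref{f2n})--(\ref{cond1}), which is arguably the more apt citation on Polish spaces, and your checks of well-posedness, attainment, and backward propagation of $BC^+$ regularity spell out details the paper's terse proof leaves implicit.
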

\begin{proof}
 1) Consider (\ref{eq4.204}) expressed in integral form
\begin{equation} \begin{multlined}
V_j({\cal G}_{0,j})=\inf_{u_j\in {\cal U}_{j}(x)}\Big\{\alpha^jf_j(x_j,u_j)\\+\sup_{Q_{j+1}(\cdot|x^j,u^j)\in \mathbf{B}_{R_{j+1}}(Q^o_{j+1})(  x^{j}, u^{j})}
\int V_{j+1}({\cal G}_{0,j},z)Q_{j+1}(dz|x^j,u^j)\Big\}.
\end{multlined}\end{equation}
 By applying (\ref{all3}) we obtain (\ref{eq4.230aaa}), (\ref{eq4.230aa}), while (\ref{pc1})-(\ref{pc2}) follow as well.
 
 2) By evaluating (\ref{eq4.201}) at $j=0$ we obtain (\ref{eq4.230aaaaa}). This completes the derivation.\hfill 
\end{proof}
\\

By Theorem \ref{th411}, the maximizing measure is given by (\ref{pc1})-(\ref{pc1aa}), and it is a functional of the nominal measure. At this stage we cannot claim that the maximizing measure is Markovian, and hence the optimal strategy is not necessarily Markov. Therefore, the computation of optimal strategies using non-Markov nominal controlled processes is computationally intensive. Next, we restrict the minimax formulation to Markov controlled nominal processes. 

\subsubsection{Dynamic Programming for Finite Horizon D-MCM Subject to Ambiguity} 
Consider the Markov nominal controlled processes, based on Definition~\ref{dt} (b), and define
\begin{equation*}
V_j(u^g_{[j,n-1]},{\cal G}_{0,j})\triangleq \hspace{1.5cm}\smashoperator{\sup_{       {\sr  {Q_{k}(\cdot| x^{k-1}, u^{k-1})  \in \mathbf{B}_{R_k}(Q^o_k)( x_{k-1}, u_{k-1})}{ k= j+1,\ldots, n}}}}  \hspace{1.6cm}\mathbb{E}_{\mathbf{Q}^{\pi}_{\nu}} \Big\{\sum_{k=j}^{n-1} \alpha^kf_k(x_k^g,u_k^g) + \alpha^nh_n(x_n^g) | {\cal G}_{0,j}  \Big\}.
\end{equation*}
In view of Section~\ref{abs}, specifically, the relation between the maximizing distribution and the nominal distribution (\ref{f2n})-(\ref{cond1}), which also apply to conditional distributions, we deduce that the maximization conditional distribution $Q_i^*(dx_i| x^{i-1}, u^{i-1})$ is Markovian, hence $Q_i^*(dx_i| x^{i-1}, u^{i-1})=Q_i^*(dx_i| x_{i-1}, u_{i-1})$, $\forall (x^{i-1},u^{i-1})\in \mathbb{K}_{0,i-1}$. This observation can be verified by checking expressions (\ref{pc1})-(\ref{pc1aa}). Then we define 
\begin{equation}\label{eq4.2300aamcm1}
V_j(u^g,x)\triangleq \hspace{1.6cm}\smashoperator{\sup_{       {\sr  {Q_{k}(\cdot| x_{k-1}, u_{k-1})  \in \mathbf{B}_{R_k}(Q^o_k)( x_{k-1}, u_{k-1})}{ k= j+1,\ldots, n}}} }\hspace{1.6cm} \mathbb{E}_{\mathbf{Q}^{\pi}_{\nu}} \Big\{\sum_{k=j}^{n-1} \alpha^kf_k(x_k^g,u_k^g) {+} \alpha^nh_n(x_n^g) | x  \Big\}.
\end{equation}
Utilizing the above observations we obtain the analog of Theorem \ref{th411} for finite horizon D-MCM, as follows.

Define the value function
\begin{equation}
 \label{vf1a}V_j(x){=}\inf_{\pi \in \mathbf{\Pi}_{j,n-1}^{DM}}\hspace{2.2cm}\smashoperator{\sup_{{\sr  {{ Q}_{k}(\cdot| x_{k-1}, u_{k-1})  \in \mathbf{B}_{R_k}(Q^o_k)(x_{k-1},u_{k-1})}{ k=j+1,\ldots, n}}   }} \hspace{1.7cm}
  {\mathbb E}_{\mathbf{Q}_\nu^\pi} \Big\{\sum_{k=j}^{n-1} \alpha^kf_k(x_k^g,u_k^g){ +} \alpha^nh_n(x_n^g)|x\Big\}. \end{equation}
Then we obtain the following theorem.\\

\begin{theorem}
\label{th4.111}
 Suppose there exists an optimal policy for Problem \ref{def4.2nn}, for the class of Markov Nominal Controlled Process of Definition~\ref{type2} (b). Then the following hold:
 
1) If the infimum over feedback strategies in (\ref{vf1a}) exists it is Markov $\pi\in \mathbf{\Pi}_{0,n-1}^{DM}$.

2) The value function $V_j(x)$ satisfies the dynamic programming recursion
\begin{eqnarray}
V_n(x)&=&\alpha^nh_n(x),\quad\mbox{$ x\in{\cal X}_n$} \label{eq4.2055}\\
V_j(x) &=& \inf_{u \in {\cal U}_j(x)} \sup_{ Q_{j+1}(\cdot| x,u)  \in \mathbf{B}_{R_{j+1}}(Q^o_{j+1})(x,u)  }  \nonumber\\[-1.5ex]\label{eq4.2044}\\[-1.5ex]
&&  \mathbb{E}_{Q_{j+1}(\cdot| x,u)}    \Big\{\alpha^jf_j(x,u)  +  V_{j+1}(x_{j+1})    \Big\},\quad\mbox{$ x\in{\cal X}_j$}.\nonumber
\end{eqnarray}

3) Assume $V_{j+1}(\cdot): {\cal X}_{j+1} \rightarrow [0,\infty)$  is bounded continuous in $x \in {\cal X}_{j+1}$, $j=0,\hdots,n-1$, then the dynamic programming recursion is given by
\begin{eqnarray}
  V_n(x) &=&\alpha^nh_n(x),\quad\mbox{ $x \in {\cal X}_n$}  \label{eq4.2058}\\
 V_j( x)&=& \inf_{u \in {\cal U}_j(x)}\Big\{\alpha^jf_j(x,u)+ \int_{{\cal X}_{j+1}}  V_{j+1}(z) Q^o_{j+1}(dz| x,u) \nonumber\\[-1.5ex]\label{eq4.2057}\\[-1.5ex]
 && + \frac{R_j}{2} \Big( \sup_{z \in{\cal X}_{j+1}} V_{j+1}(z)- \inf_{z \in{\cal X}_{j+1}} V_{j+1}(z)  \Big)  \Big\},\quad\mbox{$ x \in {\cal X}_j $}.\nonumber
\end{eqnarray}
Moreover, 
\begin{equation}
V_j(x)=\inf_{u\in {\cal U}_j(x)}\mathbb{E}_{Q^*_{j+1}}\Big\{\alpha^jf_j(x_j^g,u_j^g)
 +V_{j+1}(x_{j+1})|x_j=x\Big\}
\end{equation}
 where the optimal conditional distribution $\{Q^*_j(\cdot|\cdot,\cdot):j=0,1,\hdots,n-1\}$ is given by
\begin{eqnarray}
&&Q^*_{j+1}\left({\cal X}_{j+1}^+|x_j,u_j\right)=Q^o_{j+1}({\cal X}_{j+1}^+| x_j,u_j) + \frac{R_{j+1}}{2} \in[0,1],\ \ \mbox{$(x_j,u_j)\in \mathbb{K}_j$}\label{pp1a}\\
&&Q^*_{j+1}\left({\cal X}_{j+1}^-|x_j,u_j\right)=Q^o_{j+1}({\cal X}_{j+1}^-| x_j, u_j) - \frac{R_{j+1}}{2}\in[0,1],\ \ \mbox{$(x_j,u_j)\in \mathbb{K}_j$}\label{ppc1}\\
&&Q^*_{j+1}\left(A|x_j,u_j\right)=Q^o_{j+1}(A| x_j, u_j),\ \forall A{\subseteq}  {\cal X}_{j+1}{\setminus} {\cal X}^+_{j+1}{\cup}{\cal X}_{j+1}^-,\ \ \mbox{$(x_j,u_j)\in \mathbb{K}_j$}\label{ppc2}
\end{eqnarray}
and
\begin{eqnarray}
{\cal X}_{j+1}^+  &\triangleq & \Big\{ x_{j+1} \in {\cal X}_{j+1}{:}     V_{j+1}(x_{j+1}){=} \sup \{ V_{j+1}(x_{j+1}){:} x_{j+1} {\in} {\cal X}_{j+1} \}  \Big\}\label{pp11aa}\\
  {\cal X}_{j+1}^-  &\triangleq & \Big\{ x_{j+1} \in {\cal X}_{j+1}{:}   V_{j+1}(x_{j+1}){=} \inf \{ V_{j+1}(x_{j+1}){:} x_{j+1} {\in} {\cal X}_{j+1} \}  \Big\}.\label{ppcc22}
\end{eqnarray}

 4) The total minimax pay-off is
\begin{equation}
J_{0,n}(g^{\ast},\{{Q}^{\ast}_{i}\}^{n}_{i=0}) = \sup_{ Q_0(\cdot)  \in \mathbf{B}_{R_0}(Q^o_0)}
 \mathbb{E}_{Q_{0}}   \Big\{V_0(x_0)\Big\}. \label{eq4.2056}
\end{equation}
\end{theorem}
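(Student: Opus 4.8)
The plan is to deduce Theorem~\ref{th4.111} from Theorem~\ref{th411} by exploiting the Markov structure of the nominal kernel, the central point being that the maximizing conditional distribution inherits this structure. I would first prove parts~1) and~2) together by backward induction on $j$, establishing that the value function $V_j({\cal G}_{0,j})$ defined through the feedback problem depends on the history ${\cal G}_{0,j}$ only through the current state $x_j$, so that $V_j({\cal G}_{0,j})=V_j(x_j)$. The base case $j=n$ is immediate from (\ref{eq4.2055}). For the inductive step I would assume $V_{j+1}({\cal G}_{0,j+1})=V_{j+1}(x_{j+1})$ and inspect the inner maximization in the feedback recursion (\ref{eq4.204}): with the nominal kernel Markov, i.e. $Q^o_{j+1}(\cdot|x^j,u^j)=Q^o_{j+1}(\cdot|x_j,u_j)$, the optimal-measure formulas (\ref{pc1})--(\ref{pc1aa}) of Theorem~\ref{th411} show that $Q^*_{j+1}(\cdot|x^j,u^j)$ depends on $(x^j,u^j)$ only through $(x_j,u_j)$, since they are built from $Q^o_{j+1}$ and from the level sets ${\cal X}^+_{j+1},{\cal X}^-_{j+1}$ of $V_{j+1}$, the latter being functions of $x_{j+1}$ alone. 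Hence the inner supremum, and after reconditioning the outer infimum over the control $u_j\in{\cal U}_j(x_j)$, depend only on $x_j$; this simultaneously yields the Markov recursion (\ref{eq4.2044}) of part~2) and the claim of part~1) that the minimizing strategy may be taken in $\mathbf{\Pi}_{0,n-1}^{DM}$.

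For part~3) I would apply the closed-form maximization result (\ref{f2n}) of Section~\ref{abs} directly to the inner supremum in (\ref{eq4.2044}). Rewriting that supremum in integral form, it is precisely the maximization of the linear functional $Q_{j+1}\mapsto\int_{{\cal X}_{j+1}}V_{j+1}(z)\,Q_{j+1}(dz|x,u)$ over the total variation ball $\mathbf{B}_{R_{j+1}}(Q^o_{j+1})(x,u)$. Since $V_{j+1}\in BC^+({\cal X}_{j+1})$ by hypothesis, (\ref{f2n}) with $R=R_{j+1}$, $\mu=Q^o_{j+1}(\cdot|x,u)$ and $\ell=V_{j+1}$ evaluates this supremum as $\int_{{\cal X}_{j+1}}V_{j+1}(z)\,Q^o_{j+1}(dz|x,u)+\frac{R_{j+1}}{2}\big(\sup_{z}V_{j+1}(z)-\inf_{z}V_{j+1}(z)\big)$, which is exactly the bracketed expression in (\ref{eq4.2057}); the optimal-distribution equations (\ref{pp1a})--(\ref{ppc2}) and the support sets (\ref{pp11aa})--(\ref{ppcc22}) then follow from (\ref{cond1}), identifying $\Sigma^0,\Sigma_0$ with ${\cal X}^+_{j+1},{\cal X}^-_{j+1}$ as noted in the theorem's footnote. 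Part~4) follows by evaluating the recursion at $j=0$ and taking the supremum over the initial-measure ambiguity ball $\mathbf{B}_{R_0}(Q^o_0)$, exactly as in (\ref{eq4.230aaaaa}).

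The main obstacle I anticipate is the rigorous justification of the Markov reduction in the inductive step rather than the mechanical substitution into (\ref{f2n}): one must verify that the infimum over controls and the supremum over the ambiguity set may be taken stagewise and that the resulting value at stage $j$ genuinely loses its dependence on $(x^{j-1},u^{j-1})$. A secondary technical subtlety is that the explicit optimal measure in (\ref{pp1a})--(\ref{ppcc22}) presupposes that the level sets ${\cal X}^+_{j+1},{\cal X}^-_{j+1}$ are nonempty, i.e. that the supremum and infimum of $V_{j+1}$ over ${\cal X}_{j+1}$ are attained; the \emph{value} recursion (\ref{eq4.2057}) holds regardless via (\ref{f2n}), but the closed-form maximizer requires attainment, which is where the continuity of $V_{j+1}$ together with a mild compactness or tightness hypothesis on ${\cal X}_{j+1}$ must be invoked.
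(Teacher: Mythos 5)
Your proposal is correct and follows essentially the same route as the paper's proof: deduce from the closed-form maximizer (\ref{pc1})--(\ref{pc1aa}) of Theorem \ref{th411} that the worst-case kernel inherits the Markov property of the nominal kernel (the paper delegates the remaining strategy-reduction argument to \cite{varayia86}, which your backward induction simply spells out), then obtain part 3) by rewriting the inner supremum as a linear functional over the total variation ball and applying (\ref{f2n}), (\ref{cond1}) of Section \ref{abs}, and part 4) by evaluating at $j=0$. Your closing remark on attainment of the supremum and infimum of $V_{j+1}$ (nonemptiness of ${\cal X}^+_{j+1}$, ${\cal X}^-_{j+1}$) is a legitimate refinement that the paper handles only implicitly through the empty-set conventions of Section \ref{abs}.
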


\begin{proof} 1) Since the nominal controlled process is Markov, from Theorem \ref{th411}, (\ref{pc1})-(\ref{pc1aa}) we deduce that the maximizing measure is also Markov. By the same arguments as in \cite{varayia86} we can show that if the infimum over $u\in \mathbf{\Pi}_{0,n-1}^{DF}$ in (\ref{vf1a}) exists, then it is Markov, and hence $u\in \mathbf{\Pi}_{0,n-1}^{DM}$.

2) By reconditioning we deduce that, the value function satisfies the dynamic programming equation (\ref{eq4.2055}), (\ref{eq4.2044}).

3) By definition, (\ref{eq4.2044}) is also equivalent to 
\begin{equation*}
 V_j(x)=\inf_{u\in {\cal U}(x)}\Big\{\alpha^j f_j(x,u)+\sup_{ Q_{j+1}(\cdot| x,u)  \in \mathbf{B}_{R_{j+1}}(Q^o_{j+1})(x,u)  }\int_{{\cal X}_{j+1}}V_{j+1}(z)Q_{j+1}(dz|x,u)\Big\}.\end{equation*}
Hence, by applying the results of Section \ref{abs} we obtain (\ref{eq4.2058})-(\ref{ppc2}).

4) By evaluating (\ref{vf1a}) at $j=0$ we obtain (\ref{eq4.2056}). This completes the derivation.\hfill
\end{proof}\\

%\begin{remark}
%In many applications the nominal controlled process is described by \bes x_{j+1}^g=b_j(x_j^g,u_j^g,\xi_j),\hst x_0^g=x_0,\hst j\in \mathbb{N}^{n-1}\ees
% where $\{\xi_j:j=0,1,\hdots,n-1\}$ taking values in some metric space, $\xi_j\in (\Xi_j,d)$, is a deterministic exogenous input which belongs, for example to the space $$l^2(\Xi_{0,n-1})\triangleqangleq\left\{\xi_j\in \Xi_j, j=0,\hdots,n-1:\sum_{j=0}^{n-1}|\xi_j|^2_{\Xi_j}<\infty\right\}.$$ In this case the nominal conditional distribution becomes \bes Q^o_{j+1}(A|x,u)=I_A(b_j(x,u,\xi)),\hst A\in {\cal X}_j,\hst \xi\in\Xi_j,\hst j=0,\hdots,n-1.\ees
%\end{remark}

\begin{remark}
\label{remarkg}
 We make the following observations regarding Theorem \ref{th4.111}.\\
(a) The dynamic programming equation (\ref{eq4.2058}), (\ref{eq4.2057}) involves in its right hand side the oscillator seminorm of $V_{j+1}(\cdot)$. \\
(b) The dynamic programming recursion (\ref{eq4.2058}), (\ref{eq4.2057}) can be applied to controlled process with continuous alphabets and to controlled process with finite or countable alphabets, such as Markov Decision models.\\
\begin{comment}
(c) The sample pay-off $F^{\alpha}_{0,n}(x^n,u^{n-1})$ given by (\ref{samplepoff}) can be generalized to functionals of an additional random processes $\{w_j:j=0,1,\hdots,n\}$, $F^{\alpha}_{0,n}(x^n,u^{n-1},w^n)$ (such as the stochastic process of Example \ref{dt}).\\
%\item Definition \ref{policies} ensures that the control policies are non-empty, that is, $\mathbb{K}_i$ contains the graph of a measurable function from ${\cal X}_i\longmapsto {\cal U}_i$ for $i=0,\hdots,n-1$, that is, there exists a measurable function $g_i:{\cal X}_i\longmapsto {\cal U}_i$, satisfying $g_i(x_i)\in {\cal U}_i(x_i)$, $\forall x_i\in  {\cal X}_i$, $i=0,\hdots,n-1$. Thus, the value function $V_i(x_i)$ in (\ref{eq4.2057}), (\ref{eq4.2058}) are measurable for $i=0,\hdots,n-1$, and there exist a measurable selector $g_i(x_i)\in {\cal U}_i(x_i)$ which attain the infimum $\forall x_i\in{\cal X}_i$, $i=0\hdots,n-1$. For denumerable set ${\cal X}_i$ endowed with the discrete topology, any function $g_i:{\cal X}_i\longmapsto {\cal U}_i$ is measurable for $i=0\hdots,n-1$. General conditions for existence of such a measurable selector are given in \cite{hern1996discrete}, and the following are sufficient. \bi
%    \item The set ${\cal U}_i(x_i)$ is compact $\forall x_i\in {\cal X}_i$, $i=0\hdots,n-1$;
%    \item $f_i(\cdot,\cdot)$ is non-negative and continuous on ${\cal X}_i\times {\cal U}_i(x_i)$ for $i=0\hdots,n-1$, $h_n(\cdot)$ is non-negative and continuous on $x_n\in{\cal X}_n$;
%    \item The function mapping $(x_i,u_i)\in \mathbb{K}_i\longmapsto \int_{{\cal X}_{i+1}}v(z)Q^o(dz|x_i,u_i)$ is continuous on ${\cal U}_i(x_i)$, $\forall x_i\in {\cal X}_i$, for every bounded continuous function $v$ on ${\cal X}_{j+1}$, $j=0,\hdots,n-1$.
%    \ei
(d) The right side of dynamic programming recursions (\ref{eq4.2057}) and (\ref{eq4.2058}) involves the terms $\sup_{z\in {\cal X}_{j+1}}V_{j+1}(z)$ and $\inf_{z\in {\cal X}_{j+1}}V_{j+1}(z)$. Clearly, if $V_{j+1}(\cdot):x_{j+1}\longmapsto [0,\infty)$ is continuous and ${\cal X}_{j+1}$ are compact for $j=0,\hdots,n-1$ then the $\sup$ and $\inf$ are attained. However, the lack of compactness on $\{{\cal X}_j:j=0,\hdots,n-1\}$ can be compensated by the regularity condition $0<V_{j+1}(x)\neq +\infty$, $\forall x\in{\cal X}_{j+1}$, and $\lim_{||x||\longrightarrow 0}V_{j+1}(x)=+\infty$, $\forall x\in{\cal X}_{j+1}$, $j=0,\hdots,n-1$.
\end{comment}
\end{remark}

Next, we show that for any $j\in \mathbb{N}^{n-1}$, the minimax pay-off $V_j(x)\equiv V_j^{R}(x)$ as a function of $R_j$ is non-decreasing and concave.\\
\begin{lemma}\label{lemmadecrconc}
   Suppose the conditions of Theorem \ref{th4.111} hold and in addition $R_j=R$, $j=1,\hdots,n$. The minimax pay-off $V_j^{R}(x)\equiv V_j(x)$ defined by (\ref{vf1a}) is a non-decreasing concave function of $R$.
\end{lemma}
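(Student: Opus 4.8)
The plan is to work directly from the explicit dynamic programming recursion (\ref{eq4.2058})--(\ref{eq4.2057}) supplied by Theorem~\ref{th4.111}(3), rather than from the raw minimax definition (\ref{vf1a}), and to prove by backward induction on $j$ the \emph{stronger} statement that $V_j^R(x)$ is affine in $R$; monotonicity and concavity then follow at once. Concretely, I would show that for every $j$ there is a state-independent slope $s_j\ge 0$ with $V_j^R(x)=V_j^0(x)+R\,s_j$, where $V_j^0$ denotes the nominal ($R=0$) value function obtained from the same recursion.

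The induction starts at $j=n$, where $V_n^R(x)=\alpha^n h_n(x)$ has no dependence on $R$, so the claim holds with $s_n=0$. For the inductive step I would assume $V_{j+1}^R(z)=V_{j+1}^0(z)+R\,s_{j+1}$ with $s_{j+1}$ a constant independent of $z$, and substitute this into (\ref{eq4.2057}) with $R_j=R$. The decisive observation, and the step I expect to be the crux, is that the $R$-dependent part of $V_{j+1}^R$ is a \emph{global additive constant} (the same for every $z$), so it cancels out of the oscillator seminorm: $\sup_z V_{j+1}^R(z)-\inf_z V_{j+1}^R(z)=\sup_z V_{j+1}^0(z)-\inf_z V_{j+1}^0(z)=\mathrm{osc}(V_{j+1}^0)$, which carries no dependence on $R$. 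Boundedness and continuity of $V_{j+1}$, assumed in Theorem~\ref{th4.111}(3), guarantee that this oscillator is finite and that the infimum over $u$ is well defined.

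With that cancellation in hand the remaining bookkeeping is routine. Inside (\ref{eq4.2057}) the term $\int_{\mathcal{X}_{j+1}}V_{j+1}^R(z)Q^o_{j+1}(dz|x,u)$ equals $\int V_{j+1}^0\,dQ^o_{j+1}+R\,s_{j+1}$ because $Q^o_{j+1}$ is a probability measure, while the oscillator contributes $\tfrac{R}{2}\mathrm{osc}(V_{j+1}^0)$. Both $R$-terms are independent of $u$, hence pull out of $\inf_{u\in\mathcal{U}_j(x)}$, leaving $V_j^R(x)=V_j^0(x)+R\big(s_{j+1}+\tfrac12\mathrm{osc}(V_{j+1}^0)\big)$. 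This closes the induction with $s_j=s_{j+1}+\tfrac12\mathrm{osc}(V_{j+1}^0)=\tfrac12\sum_{k=j+1}^{n}\mathrm{osc}(V_k^0)$.

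Finally I would read off both assertions: since each $\mathrm{osc}(V_k^0)\ge 0$ we have $s_j\ge 0$, so $V_j^R$ is non-decreasing in $R$, and being affine in $R$ it is a fortiori concave. I would flag that the naive route, inserting the induction hypothesis before noticing the cancellation, produces the product $\tfrac{R}{2}\mathrm{osc}(V_{j+1}^R)$, which misleadingly suggests a quadratic (convex) dependence; exploiting the state-independence of the oscillator correction to collapse this to a linear term is precisely where care is required. An alternative mirroring the convex-combination argument used for the feedback case would instead fix a policy $\pi$, combine the maximizing kernels for radii $R^1,R^2$ convexly, invoke linearity of the pay-off in the path measure together with convexity of the total-variation ball to obtain concavity of $V_j^{\pi,R}$, and then use that a pointwise infimum of concave functions is concave; the obstacle there is reconciling the per-stage convex combination with the product (Markov) structure of the induced path measure, a difficulty the affine computation above sidesteps entirely.
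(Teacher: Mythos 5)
There is a genuine gap, and it sits exactly at the step you call the crux. Your induction takes the recursion (\ref{eq4.2057}), with the linear term $\frac{R}{2}\big(\sup_z V_{j+1}(z)-\inf_z V_{j+1}(z)\big)$, to be an exact characterization of the minimax value (\ref{vf1a}) for \emph{every} $R$, and from this concludes that $V_j^R(x)$ is affine in $R$. But affinity is false for the object the lemma is about. The linear form of the worst-case expectation is valid only while the maximizing measure (\ref{pp1a})--(\ref{ppc1}) defines genuine probabilities, i.e.\ while $Q^o_{j+1}({\cal X}^+_{j+1}|x,u)+\frac{R}{2}\leq 1$ and $Q^o_{j+1}({\cal X}^-_{j+1}|x,u)-\frac{R}{2}\geq 0$; these are precisely the constraints ``$\in[0,1]$'' written into those equations. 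Once $R$ exceeds these thresholds, the supremum over the total variation ball is given by the water-filling solution (\ref{all3a})--(\ref{all3d}) with $\alpha=\min(R,R_{\max})$, and the dependence on $R$ saturates. Indeed, for $R=2$ the ball contains every probability measure, so the supremum equals $\sup_z V_{j+1}(z)$, whereas your affine formula would give $\int V_{j+1}\,dQ^o_{j+1}+\sup_z V_{j+1}(z)-\inf_z V_{j+1}(z)$, which strictly exceeds it whenever $\int V_{j+1}\,dQ^o_{j+1}>\inf_z V_{j+1}(z)$. The paper's own computations exhibit the saturation: in (\ref{equation5.5}) the radius enters through $\min\big(\frac{R}{2},1-\mu(\Sigma^0|x_k,u_k)\big)$, and the value curves in Figures \ref{fig1.2} and \ref{fig2} visibly flatten for large $R$ --- concave and piecewise linear, not affine. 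So your induction proves the claim only on the initial interval of radii on which no saturation occurs at any stage and any state--action pair; since concavity is a global property of $R\mapsto V_j^R(x)$ and the lemma compares arbitrary radii, the argument does not establish the lemma.

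The route you sketch and then discard at the end is, in fact, the paper's proof, and it avoids the issue precisely because it never invokes the closed-form recursion. Monotonicity follows from nesting of the constraint sets, $\mathbf{B}_{R^1}(Q^o_k)(x_{k-1},u_{k-1})\subseteq\mathbf{B}_{R^2}(Q^o_k)(x_{k-1},u_{k-1})$ for $R^1\leq R^2$. For concavity, the paper fixes a policy $\pi$, takes kernels achieving the suprema in (\ref{eq4.2300aamcm1}) for $R^1$ and $R^2$, notes via the triangle inequality for $\|\cdot\|_{TV}$ that their $\lambda$-convex combinations lie in the ball of radius $\lambda R^1+(1-\lambda)R^2$, and then exploits linearity of the pay-off in the path measure --- applied to the convex combination of the two \emph{product} measures, which sidesteps the per-stage-versus-product reconciliation you worried about --- to obtain $V_j^{\pi,\lambda R^1+(1-\lambda)R^2}(x)\geq\lambda V_j^{\pi,R^1}(x)+(1-\lambda)V_j^{\pi,R^2}(x)$; finally, a pointwise infimum over $\pi$ of concave functions is concave. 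That argument works with the true, possibly saturated, supremum over the ball, which is exactly what your affine computation cannot see.
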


\begin{proof}
 Consider two values for $R^1, R^2\in \mathbb{R}^+$ such that $0\leq R^1\leq R^2$. Since 
 \begin{equation*}\mathbf{B}_{R^1}(Q^o_k)(x_{k-1},u_{k-1})\subseteq \mathbf{B}_{R^2}(Q^o_k)(x_{k-1},u_{k-1})\end{equation*}
then for every $Q_k(\cdot,x_{k-1},u_{k-1})\in \mathbf{B}_{R^1}(Q^o_k)(x_{k-1},u_{k-1})$ we have $Q_k(\cdot,x_{k-1},u_{k-1})\in B_{R^2}(Q^o_k)(x_{k-1},u_{k-1})$, $k=j+1,\hdots,n-1$. Hence, $V_j^{R^1}(x)\leq V_j^{R^2}(x)$ and thus, $V_j^{R}(x)$ is a non-decreasing function of $R\in \mathbb{R}^{+}$.\\
 Next, for a fixed $\pi\in \Pi^{DM}_{j,n-1}$ consider two points $(R^1,V_j^{\pi,R^1})$, $(R^2,V_j^{\pi,R^2})$ such that $\{Q^1_k(\cdot|x_{k-1},u_{k-1}):k=j+1,\hdots,n\}$ achieves the supremum in (\ref{eq4.2300aamcm1}) for $R^1$, and $\{Q^2_k(\cdot|x_{k-1},u_{k-1}):k=j+1,\hdots,n\}$ achieves the supremum in (\ref{eq4.2300aamcm1}) for $R^2$. Then
 \begin{eqnarray*} 
 ||Q_k^1(\cdot|x_{k-1},u_{k-1})-Q^o_k(\cdot|x_{k-1},u_{k-1})||_{TV}&\leq & R^1, \quad k=j+1,\hdots,n-1\\
 ||Q_k^2(\cdot|x_{k-1},u_{k-1})-Q^o_k(\cdot|x_{k-1},u_{k-1})||_{TV}&\leq &R^2, \quad k=j+1,\hdots,n-1.
 \end{eqnarray*}
For any $\lambda\in(0,1)$ we have
\begin{align}
  &||\lambda Q^1_k(\cdot|x_{k-1},u_{k-1})+(1-\lambda)Q^2_k(\cdot|x_{k-1},u_{k-1})-Q^o_k(\cdot|x_{k-1},u_{k-1})||_{TV}\nonumber\\
  &\leq \lambda ||Q^1_k(\cdot|x_{k-1},u_{k-1})-Q^o_k(\cdot|x_{k-1},u_{k-1})||_{TV}+(1-\lambda)||Q^2_k(\cdot|x_{k-1},u_{k-1})\label{concav}\\
  &  -Q^o_k(\cdot|x_{k-1},u_{k-1})||_{TV}\leq \lambda R^1+(1-\lambda)R^2,\hst k=j+1,\hdots,n.\nonumber
\end{align}
 Define $Q^*_k(\cdot|x_{k-1},u_{k-1})\triangleq \lambda Q^1_k(\cdot|x_{k-1},u_{k-1})+(1-\lambda)Q^2_k(\cdot|x_{k-1},u_{k-1})$, $R=\lambda R^1+(1-\lambda)R^2$. By (\ref{concav}), $Q_k^*\in \mathbf{B}_{R}(Q^o_k)(x_{k-1},u_{k-1})$, $k=j+1,\hdots,n$. Define the unique probability measure \begin{align*}
  Q^*_{j+1,n}(dx^n||u^n){\triangleq} \lambda \otimes_{k=j+1}^nQ^1_k(dx_k|x_{k-1},u_{k-1}){+}(1{-}\lambda)\otimes_{k=j+1}^n Q^2_k(dx_k|x_{k-1},u_{k-1}).
\end{align*}
 Then, \begin{align*}
  V_j^{\pi,R}(x)\geq \int \Big(\sum_{k=j}^{n-1}f_k(x_k,u_k)+h_n(x_n)\Big)Q^*_{j+1,n}(dx^n||u^n)).
\end{align*}
 Hence, \begin{eqnarray*}
  V_j^{\pi,R}(x_j)&=&\mbox{RHS of (\ref{eq4.2300aamcm1})}\\
  &\geq & \lambda\int \Big(\sum_{k=j}^{n-1}f_k(x_k,u_k)+h_n(x_n)\Big)\otimes_{k=j+1}^nQ_k^1(dx_k|x_{k-1},u_{k-1})\\
  &&+(1-\lambda)\int \Big(\sum_{k=j}^{n-1}f_k(x_k,u_k)+h_n(x_n)\Big)\otimes_{k=j+1}^nQ_k^2(dx_k|x_{k-1},u_{k-1})\\
  &=&\lambda V_j^{\pi,R^1}(x_j)+(1-\lambda)V_j^{\pi,R^2}(x_j),\quad\mbox{$ j=0,\hdots,n-1$}.
\end{eqnarray*}
 Hence, for any $\pi\in\mathbf{\Pi}^{DM}_{j,n-1}$, $V_j^{\pi,R}(x_j)$ is a concave function of $R$, and thus it is also concave for the $\pi\in\mathbf{\Pi}^{DM}_{j,n-1}$, which achieve the infimum in (\ref{vf1a}).\hfill
\end{proof}

This concavity property of the pay-off is also verified in the examples presented in Section \ref{sec.examples}.
\begin{remark}
The previous results apply to randomized strategies as well.
\end{remark}

\subsection{Minimax Dynamic Programming for Infinite Horizon D-MCM Subject to Ambiguity}
\label{sec.dpihc}
 In this section, we consider the infinite horizon version of the finite horizon D-MCM, and we derive similar results. In addition, we show that the operator associated with the dynamic programming equation is contractive, and we introduce a new policy iteration algorithm.
 
Consider the problem of minimizing the finite horizon cost
\begin{align}
  \sup_{       {\sr  {{ Q}_{k}(\cdot| x,u)  \in \mathbf{B}_{R_k}(Q^o_k)(x,u)}{ k=0,1,\ldots, n}}   }\mathbb{E}_{\mathbf{Q}^\pi_{\nu}}\Big\{\sum_{j=0}^{n-1}\alpha^jf(x_j^g,u_j^g)\Big\}\label{minfh1}
\end{align}
 with $0<\alpha<1$. By Theorem \ref{th4.111} the value function of \eqref{minfh1}, denoted by $V_j(x)$, $j=0,\hdots,n$, $x\in {\cal X}_{j}$ satisfies the dynamic programming equations (\ref{eq4.2058}), (\ref{eq4.2057}) with $h_n=0$, $R_j=R$, ${\cal X}_j={\cal X}$, ${\cal U}_j={\cal U}$, ${\cal U}_j(x)={\cal U}(x)$ and $Q^o_j(\cdot|\cdot)=Q^o(\cdot|\cdot)$ . Define $v_i(x)=\alpha^{i-n}V_{n-i}(x)$, where $0\leq i\leq n$ is the time to go, (see \cite{vanSchuppen10}). Then,
 \begin{align}
  &v_0(x){=}0\label{infvaltimevar1}\\
  &v_i(x){=}\smashoperator{\inf_{u\in {\cal U}(x)}}\hspace{.12cm} \Big\{f(x,u){+}\alpha\int_{\mathrlap{{\cal X}}}v_{i-1}(z)Q^o(dz|x,u){+}\alpha\frac{R}{2}\big(\smashoperator{\sup_{z\in{\cal X}}}v_{i-1}(z){-}\smashoperator{\inf_{z\in{\cal X}}}v_{i-1}(z)\big)\Big\}.\label{infvaltimevar2}
    \end{align}
\begin{comment}
 which is obtained as follows:
\begin{align*}
v_0(x)=&\alpha^{-n}V_n(x)=0,\hst \mbox{(since $h_n=0$)}\\
v_i(x)=& \alpha^{i-n}V_{n-i}(x)\\
=&\inf_{u\in {\cal U}(x)}\Bigg\{\alpha^{i-n}\alpha^{n-i}f(x,u)+\alpha^{i-n}\int_{{\cal X}}V_{n-i+1}(z)Q^o(dz|x,u)\\
&+\alpha^{i-n}\frac{R}{2}\left(\sup_{z\in{\cal X}}V_{n-i+1}(z)-\inf_{z\in{\cal X}}V_{n-i+1}(z)\right)\Bigg\}\\
=&\inf_{u\in {\cal U}(x)}\Bigg\{f(x,u)+\alpha\int_{{\cal X}}v_{i-1}(z)Q^o(dz|x,u)+\alpha\frac{R}{2}\left(\sup_{z\in{\cal X}}v_{i-1}(z)-\inf_{z\in{\cal X}}v_{i-1}(z)\right)\Bigg\}.
\end{align*}
\end{comment}
In contrast with finite horizon case the one given by (\ref{infvaltimevar1})-(\ref{infvaltimevar2}) proceeds from lower to higher values of indices $i$. The dynamic programming for the discounted cost \bea\mathbb{E}_{\mathbf{Q}^\pi_{\nu}}\Big\{\sum_{j=0}^{\infty}\alpha^jf(x_j^g,u_j^g)\Big\}\eea is given by 
\begin{equation}\label{eq22.70}
  v_{\infty}(x)=\smashoperator{\inf_{u\in {\cal U}(x)}}\hspace{.12cm}\Big\{f(x,u)+\alpha\int_{\mathrlap{{\cal X}}}v_{\infty}(z)Q^o(dz|x,u)+\alpha\frac{R}{2}\Big(\sup_{z\in{\cal X}}v_{\infty}(z)-\inf_{z\in{\cal X}}v_{\infty}(z)\Big)\Big\}.
\end{equation}
 The maximizing conditional distribution is 
 \begin{eqnarray}
 &&Q^*\left({\cal X}^+|x,u\right)=Q^o({\cal X}^+| x,u) + \frac{R}{2} \in[0,1],\quad\mbox{$(x,u)\in \mathbb{K}$}\label{infpp1a}\\
 &&Q^*\left({\cal X}^-|x,u\right)=Q^o({\cal X}^-| x, u) - \frac{R}{2}\in[0,1],\quad\mbox{ $(x,u)\in \mathbb{K}$}\label{infppc1}\\
 &&Q^*\left(A|x,u\right)=Q^o(A| x, u),\quad\mbox{ $\forall A\subseteq  {\cal X}\setminus {\cal X}^+\cup{\cal X}^-$},\quad\mbox{$ (x,u)\in \mathbb{K}$}\label{infppc2}
\end{eqnarray}
where
 \begin{eqnarray}
&&{\cal X}^{+} \triangleq  \Big\{ x {\in} {\cal X}:     V(x)= \sup \{ V(x): x {\in} {\cal X} \}  \Big\}\label{infpp11aa}\\
&&{\cal X}^{-}  \triangleq  \Big\{ x {\in} {\cal X}:   V(x)=\inf \{ V(x): x {\in} {\cal X} \} \Big\}.\label{infppcc22}
\end{eqnarray}

\begin{comment}
Next, we recall the following theorem from \cite{vanSchuppen10}, which we invoke to show that the operator in the right hand side of (\ref{eq22.70}) is contractive.\\

\begin{theorem} \label{contractionmappingtheorem}
 Let $(L,||\cdot||)$ be a complete normed space and let $T:L \longrightarrow L$ satisfy the following inequality for some $0<\alpha<1$, \begin{align}\label{contractmap}
  ||TV_1-TV_2||\leq \alpha||V_1-V_2||,\hst \mbox{for all}\hst V_1,V_2\in L.
\end{align}
A mapping $T$ satisfying (\ref{contractmap}) is called a contraction mapping.
Then, the following hold.

1) There exists a unique $w\in L$ satisfying $Tw=w$, called the fixed point of $T$.

2) For $V\in L$, define $\{T^nV:n\in \mathbb{Z}_+\}$ by $TV=V$, $T^{n+1}V=T^n(TV)$ then \begin{align}
  \lim_{n\longrightarrow \infty}||T^nV-w||=0,\hst \mbox{for all}\hst V\in L,
\end{align} 
where $w$ is the fixpoint defined in 1). \\
\end{theorem}
\end{comment}

Next, we show that the operator in the right hand side of (\ref{eq22.70}) is contractive.\\
\begin{lemma}\label{contractionmappinglemma}
   Let $L$ be the class of all measurable functions $V:{\cal X}\longrightarrow \mathbb{R}$, with finite norm $||V||\triangleq\max_{x\in {\cal X}}|V(x)|$, and $T: L\longmapsto L$ defined by 
   \begin{equation}\label{contreq1}
    (TV)(x)=\inf_{u\in{\cal U}(x)}\Big\{f(x,u)+\alpha\int_{{\cal X}}V(z)Q^o(dz|x,u)+\alpha\frac{R}{2}\Big(\sup_{z\in {\cal X}}V(z)-\inf_{z\in{\cal X}}V(z)\Big)\Big\}.
  \end{equation} 
If $V\in BC^+(\cal{X})$ and $\sup_{z\in{\cal X}}V(z)$, $\inf_{z\in{\cal X}}V(z)$ are finite, then $T$ is a contraction.
\end{lemma}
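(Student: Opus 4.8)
The plan is to exploit the closed-form maximization result (\ref{f2n}) of Section~\ref{abs} in order to rewrite $T$ so that the awkward oscillator term is absorbed into a supremum over \emph{genuine probability measures}, after which the contraction estimate reduces to the elementary fact that integration against a probability measure is nonexpansive in the supremum norm.

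First I would observe that, since $V\in BC^+({\cal X})$ and $R\in[0,2]$, the identity (\ref{f2n}) applied pointwise in $(x,u)$ with $\mu=Q^o(\cdot|x,u)$ and $\ell=V$ gives
\[
\int_{{\cal X}}V(z)Q^o(dz|x,u)+\frac{R}{2}\Big(\sup_{z\in{\cal X}}V(z)-\inf_{z\in{\cal X}}V(z)\Big)=\sup_{Q(\cdot|x,u)\in\mathbf{B}_R(Q^o)(x,u)}\int_{{\cal X}}V(z)Q(dz|x,u).
\]
Hence $(TV)(x)=\inf_{u\in{\cal U}(x)}\big\{f(x,u)+\alpha\,\Phi_V(x,u)\big\}$, where $\Phi_V(x,u)\triangleq\sup_{Q\in\mathbf{B}_R(Q^o)(x,u)}\int_{{\cal X}}V\,dQ$ and every competing $Q$ is a probability measure on ${\cal X}$. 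Finiteness and measurability of $TV$ (so that indeed $T:L\longmapsto L$) follow from boundedness of $f$ and $V$ under Assumption~\ref{ass4.1} together with the finiteness hypothesis on $\sup_z V$, $\inf_z V$.

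Next, for $V_1,V_2\in BC^+({\cal X})$ and each fixed $x$, I would invoke the standard estimate $|\inf_u A(u)-\inf_u B(u)|\le\sup_u|A(u)-B(u)|$ for the infimum over $u\in{\cal U}(x)$, which cancels the common term $f(x,u)$ and yields $|(TV_1)(x)-(TV_2)(x)|\le\alpha\sup_{u\in{\cal U}(x)}|\Phi_{V_1}(x,u)-\Phi_{V_2}(x,u)|$. The key step is then to show $\Phi$ is nonexpansive: using $|\sup_Q a_Q-\sup_Q b_Q|\le\sup_Q|a_Q-b_Q|$ together with the fact that each competing $Q$ satisfies $Q({\cal X})=1$, I obtain $|\Phi_{V_1}(x,u)-\Phi_{V_2}(x,u)|\le\sup_{Q}\big|\int_{\cal X}(V_1-V_2)\,dQ\big|\le\|V_1-V_2\|$. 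Taking the supremum over $x\in{\cal X}$ gives $\|TV_1-TV_2\|\le\alpha\|V_1-V_2\|$ with $\alpha\in(0,1)$, which is exactly the contraction inequality (\ref{contractmap}).

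I expect the main obstacle to be precisely the oscillator term $\tfrac{R}{2}\,\mathrm{osc}(V)$: a naive term-by-term bound, estimating $\int(V_1-V_2)\,dQ^o$ and $|\mathrm{osc}(V_1)-\mathrm{osc}(V_2)|$ separately, only delivers the modulus $\alpha(1+R)$, which exceeds $1$ once $R$ is large and therefore fails to certify a contraction over the full range $R\in[0,2]$. Recombining the two terms into the single supremum $\Phi_V$ over the total variation ball — legitimate precisely because the maximizing measure of (\ref{f2n}) is itself a probability measure — is what restores the Lipschitz constant to $\alpha$, independently of $R$.
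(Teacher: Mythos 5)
Your proof is correct and is essentially the paper's own argument: both hinge on identity (\ref{f2n}) to convert the nominal integral plus the oscillator term into a supremum of integrals against probability measures in the total variation ball (equivalently, into the integral against the maximizing measure), after which the contraction modulus $\alpha$, independent of $R$, follows because integration against a probability measure is nonexpansive in the norm $||V||=\max_{x\in{\cal X}}|V(x)|$. The only difference is presentational: the paper selects an argmin $v$ for the $V_2$-problem and uses suboptimality of the maximizing measure $Q^{V_1}$ in the $V_2$-problem before symmetrizing, whereas you package exactly those two steps into the generic inequalities $|\inf_u A(u)-\inf_u B(u)|\leq\sup_u|A(u)-B(u)|$ and $|\sup_Q a_Q-\sup_Q b_Q|\leq\sup_Q|a_Q-b_Q|$, which is marginally cleaner since it does not presuppose that the infimum over $u\in{\cal U}(x)$ is attained.
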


\begin{proof} For $V_1,V_2\in L$,
\begin{align*}
&(TV_1)(x)-(TV_2)(x)=\\
&\inf_{u\in {\cal U}(x)}\Big\{f(x,u)+\alpha\int_{\cal X}V_1(z)Q^o(dz|x,u)+\alpha\frac{R}{2}\Big(\sup_{z\in{\cal X}}V_1(z)-\inf_{z\in{\cal X}}V_1(z)\Big)\Big\}\\
&-\inf_{u\in {\cal U}(x)}\Big\{f(x,u)+\alpha\int_{\cal X}V_2(z)Q^o(dz|x,u)+\alpha\frac{R}{2}\Big(\sup_{z\in{\cal X}}V_2(z)-\inf_{z\in{\cal X}}V_2(z)\Big)\Big\}.
\end{align*}
Let 
\begin{equation*}
v\triangleq \arg\inf_{u\in {\cal U}(x)}\Big\{f(x,u)+\alpha\int_{\cal X}V_2(z)Q^o(dz|x,u)  +\alpha\frac{R}{2}\Big(\sup_{z\in{\cal X}}V_2(z)-\inf_{z\in{\cal X}}V_2(z)\Big)\Big\}.
\end{equation*}  
Then,
\begin{align*}
  &(TV_1)(x)-(TV_2)(x)\nonumber\\
  &=\inf_{u\in {\cal U}(x)}\Big\{f(x,u)+\alpha\int_{\cal X}V_1(z)Q^o(dz|x,u)+\alpha\frac{R}{2}\Big(\sup_{z\in{\cal X}}V_1(z)-\inf_{z\in{\cal X}}V_1(z)\Big)\Big\}\nonumber\\
  &\hphantom{=\inf_{u\in {\cal U}(x)}\Big\{f(x,}-\Big\{f(x,v)+\alpha\int_{\cal X}V_2(z)Q^o(dz|x,v)+\alpha\frac{R}{2}\Big(\sup_{z\in{\cal X}}V_2(z)-\inf_{z\in{\cal X}}V_2(z)\Big)\Big\}\nonumber\\
  &\leq \Big\{f(x,v)+\alpha\int_{\cal X}V_1(z)Q^o(dz|x,v)+\alpha\frac{R}{2}\Big(\sup_{z\in{\cal X}}V_1(z)-\inf_{z\in{\cal X}}V_1(z)\Big)\Big\}\nonumber\\
  &\hphantom{=\inf_{u\in {\cal U}(x)}\Big\{f(x,}-\Big\{f(x,v)+\alpha\int_{\cal X}V_2(z)Q^o(dz|x,v)+\alpha\frac{R}{2}\Big(\sup_{z\in{\cal X}}V_2(z)-\inf_{z\in{\cal X}}V_2(z)\Big)\Big\}\nonumber\\
  &\overset{(a)}=\Big\{\alpha\int_{\mathrlap{\cal X}}V_1(z)Q^{V_1}(dz|x,v) \Big\}{-}\Big\{\alpha\int_{\mathrlap{\cal X}}V_2(z)Q^o(dz|x,v)
  +\alpha\frac{R}{2}\Big(\sup_{z\in{\cal X}}V_2(z)-\inf_{z\in{\cal X}}V_2(z)\Big)\Big\}\nonumber\\
  &\overset{(b)}\leq\Big\{\alpha\int_{\cal X}V_1(z)Q^{V_1}(dz|x,v) \Big\}-\Big\{\alpha\int_{\cal X}V_2(z)Q^{V_1}(dz|x,v) \Big\}\nonumber\\
  &=\alpha\int_{\cal X}\left(V_1(z)-V_2(z)\right)Q^{V_1}(dz|x|v)
  \leq \alpha \sup_{z\in {\cal X}}|V_1(z)-V_2(z)|=\alpha||V_1-V_2||
\end{align*}
where (a) is obtained by applying (\ref{f2n}), with $\ell\equiv \alpha V_1$, $\nu^*(\cdot)\equiv Q^{V_1}(\cdot|\cdot)$, $\mu(\cdot)\equiv Q^o(\cdot|\cdot)$, and (b) is obtained by first apply (\ref{f2n}) as in (a) with $Q^{V_2}$ and then replace $Q^{V_2}$ by $Q^{V_1}$ which is suboptimal hence, the upper bound. By reversing the roles of $V_1$ and $V_2$ we get $(TV_2)(x)-(TV_1)(x)\leq \alpha ||V_2-V_1||$. Hence, $|(TV_1)(x)-(TV_2)(x)|\leq \alpha ||V_1-V_2||$ for all $x\in {\cal X}$, and
\begin{align*}
  ||TV_1-TV_2||\triangleq\max_{x\in {\cal X}}|(TV_1)(x)-(TV_2)(x)|\leq \alpha||V_1-V_2||
\end{align*}
 which implies that the operator $T:L\longmapsto L$ is a contraction.\hfill \end{proof}
\\

Utilizing Lemma \ref{contractionmappinglemma} we obtain the following theorem which is analogous to the classical result given in \cite{vanSchuppen10}.\\

\begin{theorem}\label{theoremcontr} Assume $v_{\infty}\in BC^+(\cal{X})$ and $\sup_{z\in{\cal X}}v_{\infty}(z)$, $\inf_{z\in{\cal X}}v_{\infty}(z)$ are finite.

(1) The dynamic programming equation 
\begin{equation*}
v_{\infty}(x)=\inf_{u\in {\cal U}(x)}\Big\{f(x,u)+\alpha\int_{{\cal X}}v_{\infty}(z)Q^o(dz|x,u)+\alpha\frac{R}{2}\Big(\sup_{z\in{\cal X}}v_{\infty}(z)-\inf_{z\in{\cal X}}v_{\infty}(z)\Big)\Big\}
\end{equation*} has a unique solution.

(2) Moreover, \begin{align*}
  v_{\infty}(x)=\inf_{g\in {\cal U}(x)} \mathbb{E}_{Q^*}\Big\{\sum_{j=0}^{\infty}\alpha^jf(x_j,u_j)|x_0=x\Big\}.
\end{align*}

(3) The mapping $T$ defined by \begin{align*}
  (TV)(x)=\inf_{u\in{\cal U}(x)}\Big\{f(x,u)+\alpha\int_{{\cal X}}V(z)Q^o(dz|x,u)+\alpha\frac{R}{2}\Big(\sup_{z\in {\cal X}}V(z)-\inf_{z\in{\cal X}}V(z)\Big)\Big\}
\end{align*} is a contraction mapping with respect to the norm $||V||=\max_{x\in {\cal X}}|V(x)|$.

(4) For any $V$, $\lim_{n\rightarrow \infty}||T^nV-v_{\infty}||=0$ and so \begin{align*}
  \lim_{n\longrightarrow \infty}(T^nV)(x)=v_{\infty}(x),\quad \mbox{for all}\ \ x\in {\cal X}.
\end{align*}
\end{theorem}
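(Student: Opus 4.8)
The plan is to read parts (1), (3) and (4) off the Banach fixed-point theorem, so that the only genuinely probabilistic work lies in part (2). Part (3) is immediate: it is exactly Lemma \ref{contractionmappinglemma}, which already establishes $\|TV_1 - TV_2\| \le \alpha\|V_1 - V_2\|$ for the operator (\ref{contreq1}) on the space $L$ of bounded measurable functions $V:{\cal X}\to\mathbb{R}$, endowed with $\|V\| = \max_{x\in{\cal X}}|V(x)|$; since $0<\alpha<1$ this is a contraction. Because $(L,\|\cdot\|)$ is a complete normed space, the contraction mapping theorem (the classical result behind \cite{vanSchuppen10}) supplies a unique fixed point $w = Tw$ together with $\lim_{n\to\infty}\|T^nV - w\| = 0$ for every $V\in L$. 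The equation $w = Tw$ is literally the dynamic programming equation (\ref{eq22.70}), so $w = v_\infty$: this is the uniqueness asserted in part (1), and the Picard convergence, which also gives pointwise $(T^nV)(x)\to v_\infty(x)$, is part (4).

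For part (2) I would absorb the minimax structure into a single stationary kernel and then reduce to a classical discounted control problem. Evaluating the maximizer formula (\ref{f2n}) at $V = v_\infty$ shows that the worst-case kernel $Q^*$ defined through (\ref{infpp1a})--(\ref{infppcc22}) attains the supremum, i.e. $\int_{\cal X} v_\infty(z)\,Q^*(dz|x,u) = \int_{\cal X} v_\infty(z)\,Q^o(dz|x,u) + \frac{R}{2}\big(\sup_z v_\infty - \inf_z v_\infty\big)$. Substituting this into (\ref{eq22.70}) rewrites the fixed-point equation as $v_\infty(x) = \inf_{u\in{\cal U}(x)}\mathbb{E}_{Q^*(\cdot|x,u)}\{f(x,u) + \alpha v_\infty(z)\}$, which is precisely the standard discounted dynamic programming equation (\ref{dpihdmcm}) for the ordinary MCM with fixed transition kernel $Q^*$. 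Thus $v_\infty$ is the fixed point of the ordinary discounted Bellman operator $T^{Q^*}V(x) = \inf_u \mathbb{E}_{Q^*(\cdot|x,u)}\{f + \alpha V\}$, itself an $\alpha$-contraction with the same fixed point as $T$.

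I would then invoke the classical verification result for discounted MCM from \cite{vanSchuppen10}: the unique solution of the standard equation (\ref{dpihdmcm}) coincides with the optimal discounted cost. Concretely, the Picard iterates $(T^{Q^*})^i 0$ equal the truncated costs $\inf_g \mathbb{E}_{Q^*}\{\sum_{j=0}^{i-1}\alpha^j f(x_j,u_j)\mid x_0 = x\}$ by ordinary finite-horizon dynamic programming under the fixed kernel $Q^*$, and by part (4) they converge to $v_\infty$. Matching the two limits yields $v_\infty(x) = \inf_g \mathbb{E}_{Q^*}\{\sum_{j=0}^\infty \alpha^j f(x_j,u_j)\mid x_0 = x\}$, which is part (2).

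The step I expect to be the main obstacle is the interchange of the limit $i\to\infty$ with the infimum over policies in this last identity: pointwise convergence of the truncated costs for each fixed $g$ is a consequence of $f\ge 0$ (Assumption \ref{ass4.1}) via monotone convergence, but transferring it to the infimum is not automatic. I would close this using the uniform tail estimate $\mathbb{E}_{Q^*}\{\sum_{j\ge i}\alpha^j f\} \le \frac{\|f\|}{1-\alpha}\alpha^i$, valid uniformly over all policies because $f$ is bounded and $0<\alpha<1$; this bound lets me sandwich the infimum of the infinite-horizon cost between the infimum of the truncated cost and the same quantity plus a vanishing error, forcing equality in the limit. This uniform control is exactly the ingredient already implicit in the classical result of \cite{vanSchuppen10}, so the reduction in the previous paragraph makes it available without re-deriving it.
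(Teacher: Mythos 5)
Your proposal is correct, and for parts (1), (3) and (4) it coincides with the paper's proof: part (3) is Lemma \ref{contractionmappinglemma} verbatim, and (1), (4) are read off the contraction mapping theorem (the paper cites \cite{vanSchuppen10}, Theorem 6.3.6, which is exactly that). The genuine difference is in part (2), and your route is arguably the cleaner one. The paper's proof sandwiches $\inf_g \mathbb{E}_{Q^*}\{\sum_{j=0}^{\infty}\alpha^j f\}$ between $\inf_g \mathbb{E}_{Q^*}\{\sum_{j=0}^{n-1}\alpha^j f\}$ and the same quantity plus the tail $\alpha^n M/(1-\alpha)$ — exactly your sandwich — but it then identifies the truncated cost with the minimax value iterate $v_n = T^n 0$ of (\ref{infvaltimevar1})--(\ref{infvaltimevar2}) and invokes $v_n \to v_\infty$. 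That identification is delicate: the maximizing kernels of the finite-horizon recursion are stage-dependent (their support sets come from $v_{i-1}$, not from $v_\infty$), so the truncated cost under the \emph{stationary} kernel $Q^*$ of (\ref{infpp1a})--(\ref{infppc2}) need not equal $T^n 0$ exactly; in general one only has $(T^{Q^*})^n 0 \le T^n 0$. Your reduction avoids this entirely: since $Q^*$ attains the supremum for the function $v_\infty$ by (\ref{f2n}), one has $v_\infty = T v_\infty = T^{Q^*} v_\infty$, the operator $T^{Q^*}$ is itself an $\alpha$-contraction with a fixed kernel, its Picard iterates from $0$ are \emph{exactly} the truncated optimal costs by classical finite-horizon dynamic programming, and they converge to $v_\infty$. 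One small wording correction: that convergence follows from the Banach theorem applied to $T^{Q^*}$ (whose unique fixed point is $v_\infty$), not from part (4) as literally stated, since part (4) concerns $T$; you in fact supply the needed observation yourself when you note $T^{Q^*}$ is a contraction with the same fixed point. With that reading, your argument closes exactly where the paper's argument requires a benign-but-unstated interchange, so the proposal is not only correct but patches a genuine imprecision.
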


\begin{proof}
(1) Follows from \cite{vanSchuppen10} (Theorem $6.3.6$, part (a)).

(2) We need to show that $v_{\infty}(x)$ is the minimum value of $\mathbb{E}_{Q^*}\left\{\sum_{j=0}^{\infty}\alpha^jf(x_j,u_j)\right\}$ starting in state $x_0=x$. Recall that $0\leq f(x,u)\leq M$ for all $x\in{\cal X}$, $u\in{\cal U}(x)$. Clearly, with $x_0=x$ and for all $n$, \begin{align*}
  \inf_{g\in {\cal U}(x)} \mathbb{E}_{Q^*}\Big\{\sum_{j=0}^{\infty}\alpha^jf(x_j,u_j)\Big\}\geq \inf_{g\in {\cal U}(x)} \mathbb{E}_{Q^*}\Big\{\sum_{j=0}^{n-1}\alpha^jf(x_j,u_j)\Big\}=v_n(x).
\end{align*} Hence, $
   \inf\limits_{g\in {\cal U}(x)} \mathbb{E}_{Q^*}\Big\{\sum_{j=0}^{\infty}\alpha^jf(x_j,u_j)\Big\}\geq \lim\limits_{n\rightarrow \infty}v_n(x)=v_{\infty}(x)$. Conversely, for all $n$ \begin{align*}
  \inf_{g\in {\cal U}(x)} \mathbb{E}_{Q^*}\big\{\sum_{j=0}^{\infty}\alpha^jf(x_j,u_j)\big\}{\leq} \inf_{g\in {\cal U}(x)} \mathbb{E}_{Q^*}\big\{\sum_{j=0}^{n-1}\alpha^jf(x_j,u_j)\big\}{+}\sum_{j=n}^{\infty}\alpha^jM{=}v_n(x){+}\frac{\alpha^nM}{1-\alpha}
\end{align*} and so \begin{align*}
   \inf_{g\in {\cal U}(x)} \mathbb{E}_{Q^*}\Big\{\sum_{j=0}^{\infty}\alpha^jf(x_j,u_j)\Big\}\leq \lim_{n\rightarrow \infty}\Big[v_n(x)+\frac{\alpha^nM}{1-\alpha}\Big]=v_{\infty}(x).
\end{align*} Hence, $\inf\limits_{g\in {\cal U}(x)} \mathbb{E}_{Q^*}\Big\{\sum_{j=0}^{\infty}\alpha^jf(x_j,u_j)\Big\}=v_{\infty}(x)$.

(3) This follows from Lemma \ref{contractionmappinglemma}.

(4) Follows from \cite{vanSchuppen10} (Theorem $6.3.6$, part (b)).\hfill
\end{proof}

\subsubsection{Policy Iteration Algorithm}
Next, we present a modified version of the classical policy iteration algorithm \cite{varayia86}. From part 4 of Theorem \ref{th4.111}, the policy improvement and policy evaluation steps of a policy iteration algorithm must be performed using the maximizing conditional distribution obtained under total variation distance ambiguity constraint. Hence, in addition to the classical case, in which the policy improvement and evaluation steps are performed using the nominal conditional distribution, here, under the assumption that $f(\cdot)$ is bounded and non-negative, by invoking the results developed in earlier sections we propose a modified algorithm which is expected to converge to a stationary policy in a finite number of iterations, since both state space ${\cal X}$ and control space ${\cal U}$ are finite sets, and that at each iteration a better stationary policy will be obtained.

First, we introduce some notation. Since the state space ${\cal X}$ is a finite set, with say, $n$ elements, any function $V: {\cal X}\longrightarrow \mathbb{R}^n$ may be represented by vector in $\mathbb{R}^n$ defined by
\begin{align*}
 V(x)\triangleq\left(\begin{array}{ccc}
V(x_1) & \cdots & V(x_n)
\end{array} \right)^T\in \mathbb{R}^n.
\end{align*}
Write $z\leq y$, if $z(i)\leq y(i)$, for $\forall i\in \mathbb{Z}^n\triangleq \{1,2,\hdots,n\}$; and $z<y$ if $z\leq y$ and $z\neq y$. For a stationary control law $g$, let \begin{align*}
  f(g)=\left(\begin{array}{ccc}
               f(x_1,g(x_1)) &\cdots &f(x_n,g(x_n))
             \end{array}
  \right)^T
\end{align*}
 and define each entry of the transition matrix $Q^o(g){\in}\mathbb{R}^{n\times n}$ by $Q^o_{ij}(g)=Q^o(x_j|x_i,g(x_i))\equiv Q^{g,o}(x_i|x_j)$. Rewrite (\ref{contreq1}) (with $\sup_{z\in {\cal X}}V(z)$ denoting componentwise supremum, and similarly for the infimum) as \begin{align*}
  TV=\min_{g\in \mathbb{R}^n}\Big\{f(g)+\alpha Q^o(g)V+\alpha\frac{R}{2}\Big\{\sup_{z\in {\cal X}}V(z)-\inf_{z\in{\cal X}}V(z)\Big\}\Big\}
\end{align*}
 which by Theorem \ref{th4.111} is equivalent to \begin{align*}
  TV=\min_{g\in \mathbb{R}^n}\Big\{f(g)+\alpha Q^*(g)V\Big\}
\end{align*}
 where $Q^*(g)\in \mathbb{R}^{n\times n}$ and is given by (\ref{infpp1a})-(\ref{infppc2}). Note that, the minimization is taken componentwise, i.e., $g(x_1)$ is the minimum of the first component of $f(g)+\alpha Q^*(g)V$ and so on. For each stationary policy $g$, define $T(g):\mathbb{R}^n\longrightarrow \mathbb{R}^n$ by \begin{align*}
  T(g)V=f(g)+\alpha Q^*(g)V.
\end{align*}
 Then, $T(g)$ is a contraction mapping on the space of bounded continuous functions to itself, and from Theorem \ref{theoremcontr} it follows that \begin{align*}
  V(g)=T(g)V=f(g)+\alpha Q^*(g)V
\end{align*}
 has a unique solution $V(g)\in \mathbb{R}^n$. Next, we give the policy iteration algorithm.\\

\begin{algorithm}[Policy Iteration] Consider the notation above.\\

Initialization. Let $m=0$. Solve the equation \begin{align*}
  f(g_0)+\alpha Q^o(g_0)V_{Q^o}(g_0)=V_{Q^o}(g_0)\ \ \mbox{for}\ \ V_{Q^o}(g_0)\in \mathbb{R}^n.
\end{align*}
Identify the support sets using (\ref{infpp11aa})-(\ref{infppcc22}) and the analogue of $\Sigma_k$ of Section \ref{abs}, and construct the matrix $Q^*(g_0)$ using (\ref{infpp1a})-(\ref{infppc2}). Solve the equation \begin{align*}
  f(g_0)+\alpha Q^*(g_0)V_{Q^*}(g_0)=V_{Q^*}(g_0)\ \ \mbox{for}\ \ V_{Q^*}(g_0)\in \mathbb{R}^n.
\end{align*}
 
 $1$. For $m=m+1$ while $\min\limits_{g\in \mathbb{R}^n}\Big\{f(g)+\alpha Q^*(g)V_{Q^*}(g_{m-1})\Big\}<V_{Q^*}(g_{m-1})$ do:
\bi
\item[{(a)}] (Policy Improvement) Let $g_m\in \mathbb{R}^n$ be such that \begin{align*}
  f(g_m)+\alpha Q^*(g_m)V_{Q^*}(g_{m-1})=\min_{g\in \mathbb{R}^n}\Big\{f(g)+\alpha Q^*(g)V_{Q^*}(g_{m-1})\Big\}.
\end{align*}
\item[{(b)}] (Policy Evaluation) Solve the following equation for $V_{Q^o}(g_m)\in\mathbb{R}^n$ \begin{align*}
  f(g_m)+\alpha Q^o(g_m)V_{Q^o}(g_{m})=V_{Q^o}(g_{m}).
\end{align*}
 Identify the support sets using (\ref{pp11aa})-(\ref{ppcc22}), and construct the matrix $Q^*(g_m)$ using (\ref{pp1a})-(\ref{ppc2}). Solve the equation \begin{align*}
  f(g_m)+\alpha Q^*(g_m)V_{Q^*}(g_m)=V_{Q^*}(g_m)\ \ \mbox{for} \ \ V_{Q^*}(g_m)\in \mathbb{R}^n.
\end{align*}
\ei

$2$. Set $g^*=g_m$.
\end{algorithm}

In the next section, we illustrate through examples how the theoretical results obtained in preceding sections are applied. 

\section{Examples}
\label{sec.examples}
In Section \ref{working example} we illustrate an application of the finite horizon minimax problem to the well-known machine replacement example, and in Section \ref{working example2} we illustrate an application of the infinite horizon minimax problem for discounted cost by employing the policy iteration algorithm.
%\subsection{MCM: Continuous Alphabet}
% Consider a nominal model described by linear Gaussian
%\bea x^g_{j+1}=A_jx_j^g+B_ju_J^g+G_jw_j,\hso x_0^g\sim N(\bar{x}_0;\Sigma_0),\eea
% where $A_j\in {\cal L}(\mathbb{R}^n;\mathbb{R}^n)$, $B_j\in {\cal L}(\mathbb{R}^m;\mathbb{R}^n)$, $G_j\in {\cal L}(\mathbb{R}^d;\mathbb{R}^m)$ and $w_j\sim N(0;\Sigma_j^w)$, $\Sigma^w_j\in {\cal L}^{+,\infty}(\mathbb{R}^d;\mathbb{R}^d)$. The pay-off is
%\bea J_{0,n}(g,Q)\triangleqangleq \frac{1}{2}E_Q\left\{\sum_{j=0}^{n-1}\left(<x_j,M_jx_j>+<u_j,R_ju_j>\right)+<x_n,M_nx_n>\right\},\eea
% where $M_j\in {\cal L}^{+,\infty}(\mathbb{R}^n;\mathbb{R}^n)$, $R_j\in {\cal L}_+(\mathbb{R}^m;\mathbb{R}^n)$, $M_n\in {\cal L}^{+,\infty}(\mathbb{R}^n;\mathbb{R}^n)$ and ${\cal L}^{+,\infty}({\cal X},{\cal Y})$ denotes the set of non-negative operators mapping a vector space ${\cal X}$ into a vector space ${\cal Y}$, and ${\cal L}_+$ their result to the positive operator.\\
% The dynamic programming recursion is
%\begin{align} V_j(x)&=\inf_{u\in \mathbb{R}^m}\Bigg\{\frac{1}{2}<x,M_jx>+\frac{1}{2}<u,R_ju>+\int_{\mathbb{R}^n}V_{j+1}(A_jx+B_ju+G_jw)P_{j+1}(dw)\nonumber\\
%&+\frac{R_j}{2}\left\{\sup_{w\in\mathbb{R}^d}V_{j+1}(A_jx+B_ju+G_jw)-\inf_{w\in\mathbb{R}^d}V_{j+1}(A_jx+B_ju+G_jw)\right\}\Bigg\},\\
%V_n(x)&=\frac{1}{2}<x,M_nx>.\end{align}
% Assume a solution of the form \begin{align}
%  V_j(x)=\frac{1}{2}<x,\Sigma_{k+1}x>+\gamma_{k+1},\hso \Sigma_{k+1}\in{\cal L}^+(\mathbb{R}^n;\mathbb{R}^n),\hso \gamma_{k+1}\in \mathbb{R}
%\end{align}

\subsection{Finite Horizon MCM}
\label{working example}
Consider a machine replacement example inspired by \cite{bertsekas05}. Specifically, we have a machine that is either running or is broken down. If it runs throughout one week, it makes a profit of \EUR{100} for that week. If it fails during the week, the profit is zero for that week. If it is running at the start of the week and we perform preventive maintenance, the probability that it will fail during the week is $0.4$. If we do not perform such maintenance, the probability of failure is $0.7$. The maintenance cost is set at \EUR{20}. When the machine is broken down at the start of the week, it may either be repaired at a cost of \EUR{40}, in which case it will fail during the week with a probability of $0.4$, or it may be replaced at a cost of \EUR{150} by a new machine that is guaranteed to run through its first week of operation. Assume that after $N{>}1$ weeks the machine, irrespective of its state, is scrapped with no cost.

The system dynamics is of the form $ x_{k+1}=f_k(x_k,u_k,w_k)$, $k=0,1,\dots, N-1$, where the state $x_k$ is an element of a space $S_k=\{\textbf{R},\textbf{B}\}$, $\textbf{R}=\mbox{machine running}$, $\textbf{B}=\mbox{machine broken}$, the control $u_k$ is an element of a space $U_k(x_k)$, $U_k(\textbf{R})=\{m,nm\}$, $m=\mbox{maintenance}$, $nm=\mbox{no maintenance}$, $U_k(\textbf{B})=\{r,s\}$, $r=\mbox{repair}$, $s=\mbox{replace}$. The random disturbance has a nominal conditional distribution $w_k\sim \mu(\cdot|x_k,u_k)$.
\begin{comment}
The total sample pay-off over $N$ weeks is given by \bea g_N(x_N)+\sum_{k=0}^{N-1}g_k(x_k,u_k,w_k).\eea 
where $g_k(x_k,u_k,w_k)$ is the cost incurred at time $k$ and $g_N(x_N)$ is the terminal cost. We formulate the problem as a minmax optimization of the expected cost as follows.
\bea \label{eqmaxization}\min_{u_k\in U_k(x_k)}\max_{\sr{\nu(dw_k|x_k,u_k):||\nu(\cdot|x_k,u_k)-\mu(\cdot|x_k,u_k)||_{TV}\leq R}{k=0,\hdots,N-1}}\mathbb{E}\Big\{g_N(x_N)+\sum_{k=0}^{N-1}g_k(x_k,u_k,w_k)\Big\}.\eea 
where expectation is with respect to the conditional probability distribution of $w_k$ given $x_k$ and $u_k$.
\end{comment}

Such a system can be described in terms of the discrete-time system equation $x_{k+1}=w_k$, where the nominal probability distribution of $w_k$ is given by
\begin{align*}
&\mu(w_k=\textbf{R}|x_k=\textbf{R},u_k=m)=0.6, & &\mu(w_k=\textbf{B}|x_k=\textbf{R},u_k=m)=0.4,\\
&\mu(w_k=\textbf{R}|x_k=\textbf{R},u_k=nm)=0.3,& &\mu(w_k=\textbf{B}|x_k=\textbf{R},u_k=nm)=0.7,\\
&\mu(w_k=\textbf{R}|x_k=\textbf{B},u_k=r)=0.6,& &\mu(w_k=\textbf{B}|x_k=\textbf{B},u_k=r)=0.4,\\
&\mu(w_k=\textbf{R}|x_k=\textbf{B},u_k=s)=1,& &\mu(w_k=\textbf{B}|x_k=\textbf{B},u_k=s)=0
\end{align*} 
and the input costs $C_u$ are given by: if $u=m$ then $C_m=\mbox{\EUR{20}}$, if $u=nm$ then $C_{nm}=\mbox{\EUR{0}}$, if $u=r$ then $C_r=\mbox{\EUR{40}}$, and if $u=s$ then $C_s=\mbox{\EUR{150}}$.  The cost per stage is $g_k(x_k,u_k,w_k)=C_{u_k}$ if $w_k=\textbf{R}$, and $g_k(x_k,u_k,w_k)=C_{u_k}+100$ if $w_k=\textbf{B}$.
%\[ g_k(x_k,u_k,w_k) = \left\{
%  \begin{array}{l l}
%    C_{u_k} & \quad \text{if $w_k=\textbf{R}$}\\
%    C_{u_k}+100 & \quad \text{if $w_k=\textbf{B}$.}
%  \end{array} \right.\]
Since it is assumed that after $N$ weeks the machine, irrespective of its state, is scrapped without incurring any cost the terminal cost is $ g_N(\textbf{R})=g_N(\textbf{B})=0$.

The dynamic programming algorithm for the minimax problem subject to total variation distance uncertainty is given by
\begin{eqnarray}
 V_N(x_N)&=&0\\
 \label{ex.fin.max}
V_k(x_k)&=&\min_{u_k\in U_k(x_k)}\max_{\nu(dw_k|x_k,u_k):||\nu(\cdot|x_k,u_k)-\mu(\cdot|x_k,u_k)||_{TV}\leq R}\Big\{\nonumber\\
&&\mathbb{E}\Big\{g_k(x_k,u_k,w_k)+V_{k+1}(f(x_k,u_k,w_k))\Big\}\Big\}\\
&=&\min_{u_k\in U_k(x_k)}\max_{\nu(dw_k|x_k,u_k):||\nu(\cdot|x_k,u_k)-\mu(\cdot|x_k,u_k)||_{TV}\leq R}\mathbb{E}\Big\{\ell_k(x_k,u_k,w_k)\Big\}\nonumber
\end{eqnarray}
where $ \ell_k(x_k,u_k,w_k)=g_k(x_k,u_k,w_k)+V_{k+1}(w_k)$, $k=0,1,\dots,N-1$.
To adress the maximization problem in \eqref{ex.fin.max}, for each $k=0,1,\dots,N-1$, $x_k\in\{\textbf{R},\textbf{B}\}$ and $u_k \in\{m,nm,r,s\}$, define the maximum and minimum values of $\ell(x_k,u_k,w_k)$ by
\begin{align*}
\ell_{\max}(x_k,u_k)\triangleq \max_{w_k\in\{\textbf{R},\textbf{B}\}}\ell(x_k,u_k,w_k),& &
\ell_{\min}(x_k,u_k)\triangleq \min_{w_k\in\{\textbf{R},\textbf{B}\}}\ell(x_k,u_k,w_k)
\end{align*}
and its corresponding support sets by
$\Sigma^0{=}\{w_k{\in}\{\textbf{R},\textbf{B}\}{:}\ell(x_k,u_k,w_k){=}\ell_{\max}(x_k,u_k)\}$, and $\Sigma_0{=}\{w_k{\in}\{\textbf{R},\textbf{B}\}{:}\ell(x_k,u_k,w_k){=}\ell_{\min}(x_k,u_k)\}$. By employing \eqref{all3}, the maximizing conditional probability distribution of the random parameter $w_k$ is given by 
\begin{subequations}\label{equation5.5}
\begin{align}
&\alpha=\min\Big(\frac{R}{2},1-\mu(\Sigma^0|x_k,u_k)\Big)\\
& \nu^*(\Sigma^0|x_k,u_k)=\mu(\Sigma^0|x_k,u_k)+\alpha,\qquad  \nu^*(\Sigma_0|x_k,u_k)=\Big(\mu(\Sigma_0|x_k,u_k)-\alpha\Big)^+.\label{equation5.56}
\end{align}
\end{subequations}
Based on this formulation, the dynamic programming equation is given by 
\begin{align}
 V_N(x_N)&=0\\
V_k(x_k)&=\min_{u_k\in U_k(x_k)}\mathbb{E}_{\nu^*(\cdot|\cdot,\cdot)}\Big\{g_k(x_k,u_k,w_k)+V_{k+1}(f(x_k,u_k,w_k))\Big\}.
\end{align}
%In this particular example, the random parameter $w_k$ assumes only two possible values, that is $w_k\in\{\textbf{R,}\textbf{B}\}$. If this is not the case, then in addition to (\ref{equation5.56}) one should also calculate $\nu^*(\Sigma_j|x_k,u_k)$ and also define the support sets $ \Sigma_j$ and the corresponding values $\ell_{\Sigma_j}(x_k,u_k)$ on these sets .

We assume that the planning horizon is $N=3$. The optimal cost-to-go and the optimal control policy, for each week and each possible state, as a function of $R\in [0,2]$ are illustrated in Figure \ref{fig1}. Clearly, Figure \ref{fig1.2} depicts that the optimal cost-to-go is a non-decreasing concave function of $R$ as stated in Lemma \ref{lemmadecrconc}.

In addition, the optimum solution for two possible values of $R$ and for each week results in optimal control policies as depicted in Table \ref{table1}. By setting $R{=}0$, we choose to calculate the optimal control policy when the true conditional probability $\nu(\cdot|x_k,u_k)=\mu(\cdot|x_k,u_k),\ k{=}0,1,2$. This corresponds to the classical dynamic programming algorithm. By setting $R{=}0.85$, we choose to calculate the optimal control policy when the true conditional distribution $\nu(\cdot|x_k,u_k)\neq\mu(\cdot|x_k,u_k),\ k{=}0,1,2$. Taking into consideration the maximization (that is, by setting $R{>}0$) the dynamic programming algorithm results in optimal control policies which are more robust with respect to uncertainty, but with the sacrifice of low present and future costs. In cases in which we need to balance the desire for low costs with the undesirability of scenarios with high uncertainty, we must choose the appropriate value of $R$ by using Figure \ref{fig1.2}.
%\begin{figure}[!ht]
%\centering
%%%----start of first subfigure----
%\subfloat[]{
%\label{fig1.2} %% label for first subfigure
%\includegraphics[trim = 11mm 5mm 10mm 13mm, clip, width=.5\textwidth]{ex1.eps}}
%%\hspace{1cm}
%%%----start of second subfigure----
%\subfloat[]{
%\label{ig1.3} %% label for second subfigure
%\includegraphics[trim = 11mm 5mm 10mm 13mm,clip,width=.5\textwidth]{ex2.eps}}
%\caption[Optimal Solution of Example AA]{(a) Optimal Cost-to-Go; (b) Optimal Control Policy (``m"= maintenance, ``nm= no maintenace", ``r=repair", ``s=replace"). }
%\label{fig1}
%\end{figure}

\begin{figure}[!htbp]
\centering
%%----start of second subfigure----
\subfloat[][]{
\label{fig1.2} %% label for second subfigure
\includegraphics[width=.8\linewidth]{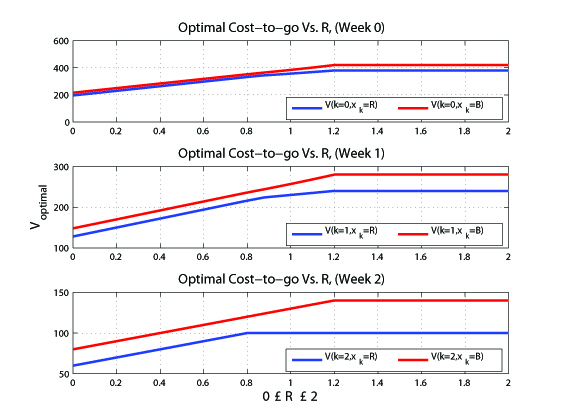}}\\
%%----start of third subfigure----
\subfloat[][]{
\label{fig1.3} %% label for second subfigure
\includegraphics[width=.8\linewidth]{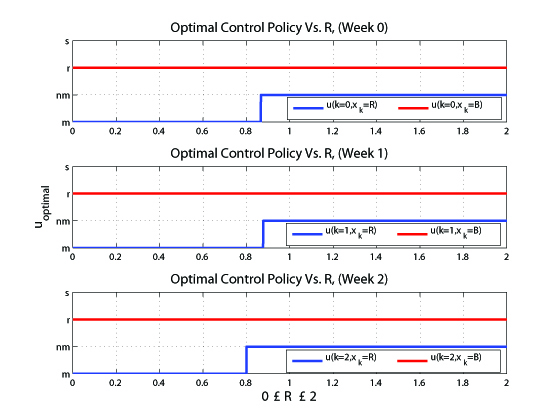}}
\caption[Optimal Solution of Example AA]{(a) Optimal Cost-to-Go; (b) Optimal Control Policy (``m"= maintenance, ``nm= no maintenace", ``r=repair", ``s=replace"). }
\label{fig1}
\end{figure}

\begin{table}[!ht]\setlength{\tabcolsep}{1pt}\centering
\begin{tabular}{cc|c|c||c|c||c|c|}
  \cline{3-8}
& \multirow{3}{*}{Stock}&\multicolumn{2}{ ||c|| }{Week.$0$}&\multicolumn{2}{ |c|| }{Week.$1$}&\multicolumn{2}{ |c| }{Week.$2$}\\ \cline{2-8}
\multicolumn{1}{ c| }{}
& & \multicolumn{1}{ ||c| }{Cost-to-go}&Optimal& Cost-to-go&Optimal& Cost-to-go&Optimal\\
\multicolumn{1}{ c| }{}
& &\multicolumn{1}{ ||c| }{} &  Policy& &  Policy& &  Policy\\  \cline{1-8}
\multicolumn{1}{ |c| }{\multirow{2}{*}{{$R=0$}}}&\textbf{R}& \multicolumn{1}{ ||c| }{196} & m     & 128  & m     & 60  & m\\
\multicolumn{1}{ |c| }{}
&\textbf{B} & \multicolumn{1}{ ||c| }{216} & r & 148  & r     & 80  & r\\ \hline 
\multicolumn{1}{ |c| }{\multirow{2}{*}{{$R=0.85$}}}&\textbf{R} & \multicolumn{1}{ ||c| }{340} & m     & 221  & m     & 100  & nm\\
\multicolumn{1}{ |c| }{}
& \textbf{B} & \multicolumn{1}{ ||c| }{360} & r     & 241  & r     & 122  & r\\ \hline
\end{tabular}
\caption{Dynamic Programming Algorithm Results}\label{table1}
\end{table}

\subsection{Infinite Horizon D-MCM}
\label{working example2}
 Here, we illustrate an application of the infinite horizon minimax problem for discounted cost, by considering the stochastic control system shown in Figure \ref{MCfigure}, with state space ${\cal X}=\{1,2,3\}$ and control set ${\cal U}=\{u_1,u_2\}$. 

Assume the nominal transition probabilities are given under controls $u_1$ and $u_2$ by
\begin{align}
  &Q^o(u_1)=\frac{1}{9}\left(
                    \begin{array}{ccc}
                      3 & 1 & 5 \\
                      4 & 2 & 3 \\
                      1 & 6 & 2 \\
                    \end{array}
                  \right),\hst Q^o(u_2)=\frac{1}{9}\left(
                    \begin{array}{ccc}
                      1 & 2 & 6 \\
                      4 & 2 & 3 \\
                      4 & 1 & 4 \\
                    \end{array}
                  \right)
\end{align}
 the discount factor is $\alpha=0.9$, the total variation distance radius is $R=\frac{6}{9}$, and the cost function under each state and action is \begin{align*}
  f(1,u_1)=2,\ f(2,u_1)=1,\ f(3,u_1)=3,\ f(1,u_2)=0.5,\ f(2,u_2)=3,\ f(3,u_2)=0.
\end{align*}

 Using policy iteration of Section \ref{dp}, with initial policies $g_0(1)=u_1$, $g_0(2)=u_2$, $g_0(3)=u_2$, the algorithm converge to the following optimal policy and value after two iterations.
\begin{align*}
g^*=g_2\triangleq \begin{pmatrix}
                   g_2(1) \\
                   g_2(2) \\
                   g_2(3)
                 \end{pmatrix}
=\begin{pmatrix}
                   u_2 \\
                   u_1 \\
                   u_2
                 \end{pmatrix}
,\hst V_{Q^*}(g^*)=V_{Q^*}(g_2)\triangleq \begin{pmatrix}
                   V_{Q^*}(1) \\
                   V_{Q^*}(2) \\
                   V_{Q^*}(3)
                 \end{pmatrix}
=\begin{pmatrix}
                   6.79 \\
                   7.43\\
                   6.32
                 \end{pmatrix}.
\end{align*}
 Figure \ref{fig2} depicts the optimal value functions for all possible values of $R$, and shows that, the value functions are non-decreasing and concave functions of $R$ as stated in Lemma \ref{lemmadecrconc}.
\begin{figure}[!ht]
\centering
%%----start of first subfigure----
\subfloat[]{
\label{MCfigure} %% label for first subfigure
\includegraphics[ width=.5\textwidth]{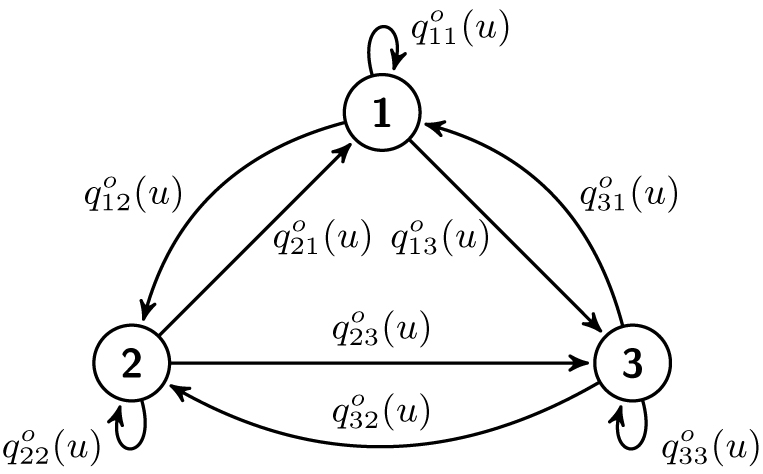}}
%\hspace{1cm}
%%----start of second subfigure----
\subfloat[]{
\label{fig2} %% label for second subfigure
\includegraphics[width=.5\textwidth]{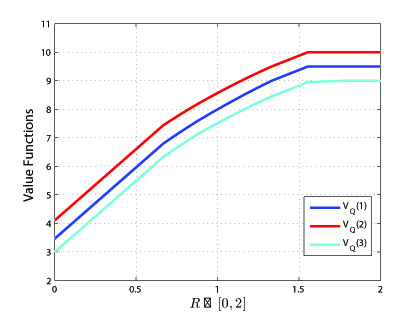}}
\caption[Optimal Solution of Example B]{(a) Transition Probability Graph; (b) Optimal Value as a Function of Total Variation Parameter.  }
\label{fig111}
\end{figure}

%%%%%%%%%%%%%%%%%%%%%%%%%%%%%%%%%%%%%%%%%%%%%%%%%%%%%%%%%%%%%%%%%%%%%%%%%%%%%%%
\section{Conclusions}

In this paper, we examined the optimality of stochastic control strategies via dynamic programming, when the ambiguity class is described by the total variation distance between the conditional distribution of the controlled process and the nominal conditional distribution. The problem is formulated using minimax strategies in which the control process seeks to minimize the pay-off while the controlled process seeks to maximize it over the total variation ambiguity class. By using concepts from signed measures a closed form expression of the maximizing measure is derived. It is then employed to obtain a new dynamic programming recursion which, in addition to the standard terms, includes the oscillator seminorm of the value function, while for the infinite horizon case a new discounted dynamic programming equation is obtained. It is shown that the dynamic programming operator is contractive, and a new policy iteration algorithm is developed for computing the optimal stochastic control strategies. Finally, we illustrate through examples the applications of our results.

\bibliographystyle{siam.bst}
\bibliography{DynamicProgramming}
\end{document}